\newif\ifXY\XYtrue
\long\def\forget#1\forgotten{}
\definecolor{greenbean}{RGB}{199,237,204}
\newtheorem{thm}{Theorem}[section]
\newtheorem{lemma}[thm]{Lemma}
\newtheorem{cor}[thm]{Corollary}
\newtheorem{notation}{Notation}[]
\newtheorem{Def}[thm]{Definition}
\newtheorem{construction}{Construction}
\newtheorem{rmk}[thm]{Remark}
\def\R{{\mathbb R}}
\def\C{{\mathbb C}}
\def\Z{{\mathbb Z}}
\def\P{{\mathbb P}}
\def\F{{\mathbb F}}
\def \ta{\tau}
\def \ta1{\tau_1}
\def \G{\gamma}
\newcommand\sg[1]{{\langle{#1}\rangle}}
\newcommand\nsg[1]{{\sg{\sg{#1}}}}
\newcommand\card[1]{{\left|#1\right|}}
\newcommand\co{{\,{:}\,}}
\def \CP{\mathbb{C} \mathbb{P}}
\def \CP{\mathbb{C} \mathbb{P}}
\newcommand\Galois[1]{{#1_{\operatorname{Gal}}}}
\def \prodl{\prod\limits}
\newcommand\Cref[1]{{Corollary~\ref{#1}}}
\newcommand\set[1]{{\{{#1}\}}}
\newcommand\Set[2]{{(\set{#1} \times \set{#2})}}
\newcommand{\ug}[1]{\G_{#1}}
\newcommand{\uGammaSq}[2]{\begin{equation}\label{#1}
	\ug{#2}^2\uGammaSqChecknextarg}
\newcommand{\uGammaSqChecknextarg}{\@ifnextchar\bgroup{\uGammaSqGobblenextarg}{ = e,
		\end{equation} }}
\newcommand{\uGammaSqGobblenextarg}[1]{ = \ug{#1}^2\@ifnextchar\bgroup{\uGammaSqGobblenextarg}{  = e
\end{equation}}}
\newcommand\Dref[1]{{Definition~\ref{#1}}}  % Definition reference
\newcommand\Lref[1]{{Lemma~\ref{#1}}}       % Lemma reference
\newcommand\Fref[1]{{Figure~\ref{#1}}}      % Figure reference
\newcommand\eq[1]{{(\ref{#1})}}             % reference to an equation - raw
\newcommand\Magma{{\texttt{Magma}}}
\newcommand{\ubegineq}[1]{\begin{equation}\label{#1}}
\newcommand{\uendeq}{\end{equation}}
\newcommand{\uThreePointOuter}[5]{
%	#1 - vertex_num
%	#2 - first conic_num
%	#3 - line_num
%	#4 - second conic num
%	#5 - eq_label
%Vertex $V_{#1} $ is an outer $3$-point. The braid monodromy corresponding to this $3$-point is:
%\begin{eqnarray} \label{#5-1}
%{\widetilde{\Delta_#1}} = Z_{#2 \; #2', #4 \; #4'}^2 \cdot Z_{#2',#3\;#3'}^3\cdot(Z_{#2\;#2'})^{Z_{#2',#3\;#3'}^2}\cdot(Z_{#3\;#3',#4}^3)^{Z_{#2',#3\;#3'}^2}
%\cdot(Z_{#4\;#4'})^{Z_{#3\;#3',#4}^2Z_{#2',#3\;#3'}^2}. \nonumber
%\end{eqnarray}
%${\widetilde{\Delta_#1}}$ thus gives rise to the following relations:
\begin{equation}\label{#5-2}
\langle\G_{#2'},\G_{#3}\rangle=\langle\G_{#2'},\G_{#3'}\rangle=\langle\G_{#2'},
\G_{#3}^{-1}\G_{#3'}\G_{#3}\rangle=e
\end{equation}
\begin{equation}\label{#5-3}
\G_{#2}=\G_{#3'}\G_{#3}\G_{#2'}\G_{#3}^{-1}{\G_{#3'}}^{-1}
\end{equation}
\begin{equation}\label{#5-4}
\begin{split}
\langle\G_{#4},\G_{#3'}\G_{#3}\G_{#2'}\G_{#3}{\G_{#2'}}^{-1}\G_{#3}^{-1}
{\G_{#3'}}^{-1}\rangle=&
\langle\G_{#4},\G_{#3'}\G_{#3}\G_{#2'}\G_{#3'}{\G_{#2'}}^{-1}\G_{#3}^{-1}
{\G_{#3'}}^{-1}\rangle= \\
=& \langle\G_{#4},\G_{#3'}\G_{#3}\G_{#2'}\G_{#3}^{-1}\G_{#3'}\G_{#3}
{\G_{#2'}}^{-1}\G_{#3}^{-1}{\G_{#3'}}^{-1}\rangle=e
\end{split}
\end{equation}
\begin{equation}\label{#5-5}
\G_{#4'}=\G_{#4}\G_{#3'}\G_{#3}\G_{#2'}\G_{#3'}\G_{#3}{\G_{#2'}}^{-1}
\G_{#3}^{-1}{\G_{#3'}}^{-1}\G_{#4}\G_{#3'}\G_{#3}\G_{#2'}\G_{#3}^{-1}
{\G_{#3'}}^{-1}{\G_{#2'}}^{-1}\G_{#3}^{-1}{\G_{#3'}}^{-1}\G_{#4}^{-1}
\end{equation}
\begin{equation}\label{#5-6}
[\G_{#2},\G_{#4}]=[\G_{#2},\G_{#4'}]=[\G_{#2'},\G_{#4}]=
[\G_{#2'},\G_{#4'}]=e.
\end{equation}
}
\newcommand{\uFourPointInner}[6]{
	%	#1 - vertex num
	%	#2 - first line (down)
	%	#3 - second line (left)
	%	#4 - third line (right)
	%	#5 - fourth line (upwards)
	% 	#6 - equations label prefix
	%Vertex $V_{#1}$ is an inner $4$-point.
%	The corresponding relations in $G$ are:
	\begin{equation}\label{#6-1} \langle\G_{#2'},\G_{#3}\rangle=\langle\G_{#2'},\G_{#3'}\rangle=\langle\G_{#2'},\G_{#3}^{-1}\G_{#3'}\G_{#3}\rangle=e
	\end{equation}
	\begin{equation}\label{#6-2} \langle\G_{#4},\G_{#5}\rangle=\langle\G_{#4'},\G_{#5}\rangle=\langle\G_{#4}^{-1}\G_{#4'}\G_{#4},\G_{#5}\rangle=e
	\end{equation}
	\begin{equation}\label{#6-3}
	[\G_{#3'}\G_{#3}\G_{#2'}\G_{#3}^{-1}{\G_{#3'}}^{-1},\G_{#5}] = e
	\end{equation}
	\begin{equation}\label{#6-4}
 [\G_{#3'}\G_{#3}\G_{#2'}\G_{#3}^{-1}{\G_{#3'}}^{-1},\G_{#4}^{-1}{\G_{#4'}}^{-1}\G_{#5}^{-1}\G_{#5'}\G_{#5}\G_{#4'}\G_{#4}] = e
	\end{equation}
	\begin{equation}\label{#6-5} \langle\G_{#2},\G_{#3}\rangle=\langle\G_{#2},\G_{#3'}\rangle=\langle\G_{#2},\G_{#3}^{-1}\G_{#3'}\G_{#3}\rangle=e
	\end{equation}
	\begin{equation}\label{#6-6}
 \langle\G_{#4},\G_{#5}^{-1}\G_{#5'}\G_{#5}\rangle=\langle\G_{#4'},\G_{#5}^{-1}\G_{#5'}\G_{#5}\rangle=\langle\G_{#4}^{-1}
\G_{#4'}\G_{#4},\G_{#5}^{-1}\G_{#5'}\G_{#5}\rangle=e
	\end{equation}
	\begin{equation}\label{#6-7} [\G_{#3'}\G_{#3}\G_{#2}\G_{#3}^{-1}{\G_{#3'}}^{-1},\G_{#5}^{-1}\G_{#5'}\G_{#5}] = e
	\end{equation}
	\begin{equation}\label{#6-8}
	[\G_{#3'}\G_{#3}\G_{#2}\G_{#3}^{-1}{\G_{#3'}}^{-1}, \G_{#4}^{-1}{\G_{#4'}}^{-1}\G_{#5}^{-1}{\G_{#5'}}^{-1}\G_{#5}\G_{#5'}\G_{#5}\G_{#4'}\G_{#4}] = e
	\end{equation}
	\begin{equation}\label{#6-9}
	\G_{#3'}\G_{#3}\G_{#2'}\G_{#3}\G_{#2'}^{-1}\G_{#3}^{-1}{\G_{#3'}}^{-1} = \G_{#5}\G_{#4'}\G_{#5}^{-1}
	\end{equation}
	\begin{equation}\label{#6-10}
	\G_{#3'}\G_{#3}\G_{#2'}\G_{#3'}\G_{#2'}^{-1}\G_{#3}^{-1}{\G_{#3'}}^{-1} = \G_{#5}\G_{#4'}\G_{#4}{\G_{#4'}}^{-1}\G_{#5}^{-1}
	\end{equation}
	\begin{equation}\label{#6-11}
	\G_{#3'}\G_{#3}\G_{#2}\G_{#3}\G_{#2}^{-1}\G_{#3}^{-1}{\G_{#3'}}^{-1} = \G_{#5}^{-1}\G_{#5'}\G_{#5}\G_{#4'}\G_{#5}^{-1}{\G_{#5'}}^{-1}\G_{#5}
	\end{equation}
	\begin{equation}\label{#6-12}
	\G_{#3'}\G_{#3}\G_{#2}\G_{#3'}\G_{#2}^{-1}\G_{#3}^{-1}{\G_{#3'}}^{-1} = \G_{#5}^{-1}\G_{#5'}\G_{#5}\G_{#4'}\G_{#4}\G_{#4'}^{-1}\G_{#5}^{-1}{\G_{#5'}}^{-1}\G_{#5}.
	\end{equation}
}
\newcommand{\uProjRel}[2]{
\begin{equation}\label{#1}
\G_{#2'}\G_{#2}\uProjRelChecknextarg} \newcommand{\uProjRelChecknextarg}{\@ifnextchar\bgroup{\uProjRelGobblenextarg}{ = e	\end{equation}}}
\newcommand{\uProjRelGobblenextarg}[1]{\G_{#1'}\G_{#1}\@ifnextchar\bgroup{\uProjRelGobblenextarg}{  = e. \end{equation}}}
\begin{document}
		\definecolor{circ_col}{rgb}{0,0,0}

\renewcommand{\subjclassname}{%

       \textup{2000} Mathematics Subject Classification}

\title[fundamental group of Galois covers of surfaces of degree $8$]{The fundamental group of Galois covers of surfaces of degree $8$}
%\footnotetext{Email address: Meirav Amram: meiravt@sce.ac.il, Cheng Gong: cgong@suda.edu.cn, Praveen Roy: praveen.roy1991@gmail.com, Uriel Sinichkin: sinichkin@mail.tau.ac.il, Uzi Vishne: vishne@math.biu.ac.il \\2020 Mathematics Subject Classification. 14D06, 14H30, 14J10. \\	{\bf Key words}: Fundamental group, degeneration, Galois cover, classification of surfaces}

%\stepcounter{footnote}
%\footnotetext{}
\author[Amram]{Meirav Amram}
\address{Meirav Amram, Department of Mathematics, Shamoon College of Engineering, Ashdod, Israel}
\email{meiravt@sce.ac.il}

\author[Gong]{Cheng Gong}
\address{Cheng Gong, Department of Mathematics, Soochow University, Suzhou 215006, Jiangsu, P. R. China}
\email{cgong@suda.edu.cn}

\author[Roy]{Praveen Kumar Roy}
\address{Praveen Kumar Roy, UM-DAE Centre for excellence in basic sciences, Mumbai, India}
\email{praveen.roy1991@gmail.com}

\author[Sinichkin]{Uriel Sinichkin}
\address{Uriel Sinichkin, School of Mathematical Sciences, Tel Aviv University, Tel Aviv, Israel}
\email{sinichkin@mail.tau.ac.il}

\author[Vishne]{Uzi Vishne}
\address{Uzi vishne, Department of Mathematics, Bar-Ilan University, Israel}
\email{vishne@math.biu.ac.il}

%\author[1]{Meirav Amram}
%\author[2]{Cheng Gong}
%\author[1,3]{Praveen Kumar Roy}
%\author[4]{Uriel Sinichkin}
%\author[5]{Uzi Vishne}

%\affil[1]{\small{Department of Mathematics, Shamoon College of Engineering, Ashdod, Israel}}
%\affil[2]{\small{Department of Mathematics, Soochow University, Suzhou 215006, Jiangsu, P. R. China}}
%\affil[3]{\small{UM-DAE Centre for excellence in basic sciences, Mumbai, India}}
%\affil[4]{\small{School of Mathematical Sciences, Tel Aviv University, Tel Aviv, Israel}}
%\affil[5]{\small{Department of Mathematics, Bar-Ilan University, Israel}}
	
	%\date{\today}
	
	%\maketitle
	
\begin{abstract}
We compute the fundamental group of the Galois cover of a surface of degree $8$, with singularities of degree $4$ whose degeneration is homeomorphic to a sphere. The group is shown to be a metabelian group of order $2^{23}$. The computation amalgamates local groups, classified elsewhere, by an iterative combination of computational and group theoretic methods. Three simplified surfaces, for which the fundamental group of the Galois cover is trivial, hint toward complications that depend on the homotopy of the degenerated surface.
\end{abstract}

\keywords{fundamental group, degeneration, Galois cover, classification of surfaces\\
\indent  {\it MSC2010:} Primary: 14D06, 14H30, 14J10.}

%\date{\today}

\maketitle

%	
%	\tableofcontents
%	
	
\section{Introduction}\label{outline}
	
The classification of algebraic surfaces is a classical problem in algebraic geometry.
The fundamental group of the complement of a branch curve and the fundamental group of the Galois cover of the surface, are  important invariants of the surface. Degeneration is a crucial tool in classifying surfaces, leading to a presentation of these groups by generators and relations.
Some recent studies of degenerations and their structure appear in \cite{CCFM,2008,CLM}.

Given a surface $X$ embedded in~$\CP^N$ for some~$N$, and a generic projection to~$\C\P^2$ with a branch curve~$S$, the fundamental group $G=\pi_1(\CP^2-S)$ is an invariant of the degeneration class of $X$.
The standard generators in the Van Kampen presentation of this group are denoted by $\gamma_1,\gamma_{1'},\dots,\gamma_q,\gamma_{q'}$, where~$q$ is the number of lines in the degenerated object. There is a natural map from the quotient~$G/\nsg{\gamma^2}$ to the symmetric group $S_n$, where $n = \deg X$ is the degree of the projection and~$\nsg{\gamma^2}$ is the  normal subgroup generated by $\gamma_1^2,\gamma_{1'}^2,\dots,\gamma_q^2,\gamma_{q'}^2$. The fundamental group of the Galois cover $\Galois{X}$ is the kernel of this map \cite{Li08}, so we have a short exact sequence
$$1\longrightarrow \pi_1(\Galois{X}) \longrightarrow \pi_1(\CP^2-S)/\nsg{\gamma^2} \longrightarrow S_n \longrightarrow 1.$$

The presentation of group $G$ is fairly complicated and it is often a challenging task to ascertain basic properties of this group. The computation has been carried out in many cases, for example, for $\C\P^1 \times \C\P^1$ \cite{10}, the Veronese surfaces \cite{14,15}, Hirzebruch surfaces $F_k(a,b)$ for $a$ and $b$ relatively prime \cite{13}, $\C\P^1 \times T$ \cite{AG,CP1T} where $T$ is the complex torus, toric varieties~\cite{Ogata}, and surfaces with Zappatic singularities of type~$E_k$ \cite{ZAPP}. In one of the most complex cases, that of the surface $T \times T$ \cite{TT},  the Van Kampen presentation had 54 generators and nearly 2000 relations.

To the degeneration of a surface $X$ we associate the {\bf{shadow}}, which is a $2$-dimensional simplicial complex. There is one $2$-cell (i.e., a triangle) for each irreducible component of the degeneration, which are all copies of $\CP^2$. Two triangles share a $1$-cell (an edge), respectively a $0$-cell (a vertex), if the corresponding planes intersect in a line, respectively at a point. In particular, the number of triangles in the shadow is the degree of~$X$. Edges that belong to a single triangle are {\bf{redundant}}. The number of non-redundant edges  meeting in a vertex is the multiplicity (or the degree) of the singularity at this point. The shadow is {\bf{planar}} if it can be embedded in the Euclidean plane~$\R^2$. Experience shows that the computation of the fundamental group is typically easier when the shadow is planar.

Most examples studied so far have planar shadows \cite{ZAPP,Ogata,13,14,15}.  Relatively new works on surfaces of degrees $5$ and $6$, with planar shadows, appear in \cite {degree5,degree6}. Recently, the first named author studied surfaces with non-planar shadows that have degrees~$4$ and~$6$, see \cite{Nonplanar1}. The shadow of the surface $\CP^1 \times T$ (where $T$ is the torus), which was studied in \cite{AG}, is homeomorphic to a cylinder.

In this work we focus on surfaces of degree $8$, whose shadow is homeomorphic to the sphere $S^2$, such that every vertex in the shadow has degree~$4$. Such a shadow can be described as the union of two triangulated squares glued along all four edges, see Figure \ref{X4};
for that reason we denote the shadow as $X^4$.
As expected, $\pi_1(\Galois{X^4})$  has rather complicated computations. We thus implement the following technique:
We introduce intermediate shadows, obtained by ``cutting open'' one glued pair of edges in each step. There are thus four shadows,
$$X^1 \rightarrow X^2 \rightarrow X^3 \rightarrow X^4,$$
where each arrow denotes a covering map obtained from identifying two edges, see Figures~\ref{X1}, \ref{X2}, and~\ref{X3}, respectively.
Computing the four fundamental groups in this manner becomes a sequence of problems of increasing complexity, each of which requires additional ideas to solve.
For $i = 1,2,3$, we were able to use \Magma\ to  simplify the presentation of $\pi_1(\Galois{X^i})$,
resulting in each case in a trivial group. Singularities of high multiplicity complicate the computation, so for $X^4$ we needed  to incorporate the methods of \cite{ALV} and \cite{RTV}.

For every graph $T$ there are a Coxeter group $C(T)$ and a natural quotient $C_Y(T)$.
This is relevant here because in all cases computed so far, the fundamental group of the surface turns out to be a quotient of $C_Y(T)$, where $T$ is the dual graph of the shadow. Naively, we would want to make all computations in $C_Y(T)$, because this group is fully understood.
However, not all the defining relations of $C_Y(T)$ are given {\it a priori}. This is resolved by a technical innovation of a multi-layered computation. We run \Magma\ on the given presentation. When we cannot show that the fundamental group is indeed a quotient of $C_Y(T)$, we show that a subgroup is a quotient of $C_Y(T')$ for a subgraph $T'$, use this to obtain more relations, run \Magma\ again, and repeat.
This is best demonstrated in the computation for $X^3$ below.

As we progress from $X^1$ to $X^2$ to $X^3$ and finally to $X^4$, the situation becomes more complex. For $X^1$ the traditional techniques suffice. For $X^2$ we used \Magma\ and methods from  \cite{RTV}. For $X^3$ we used the multi-layered computation described above. For $X^4$ we needed to also combine ideas from \cite{ALV}, which computes the Artin covers of $C_Y(T)$.

The remainder of this paper is organized as follows. In Section \ref{section:method} we give the constructions of $X^i$, $i=1,2,3,4$, we recall the Van Kampen Theorem to get presentations of fundamental groups, and give some necessary details on Coxeter groups and dual graphs from the papers \cite{ALV} and \cite{RTV}.
In Theorems \ref{PropX1}, \ref{PropX2}, \ref{PropX3}, and \ref{PropX4}  (Section \ref{results})  the groups $\pi_1(\Galois{X})$ related to  $X^i$, $i=1,2,3,4$, are determined.

\forget
((We note that degenerations with planar representations of a bounded degree can be enumerated recursively, because the planar simplicial complexes can be enumerated. In the current paper, it is unlikely to obtain such easy enumerations, and much work and thought should be invested to fit the numbers and produce a geometric object that matches the constrains of the work process. Moreover, some of the methods are no longer the same traditional ones they were until now. For example, determination of groups is no longer carried out in the usual way, but we are already reaching the construction of algebraic computer programs.))
\forgotten

%\paragraph{Acknowledgments:}

\section{Preliminaries and methods}\label{section:method}	

In this section we describe the shadows $X^1$, $X^2$, $X^3$, and $X^4$ in detail, and apply standard techniques to exhibit  presentations for their fundamental groups $\pi_1(\Galois{X^i})$. We recommend  \cite{CCFM,2008,CLM} as sources for relevant information about degenerations and \cite{degree6} for detailed background about fundamental groups of complements of branch curves and fundamental groups of Galois covers.

%We construct now the four degree 8 degenerations and give their figures. Then we explain in short the algorithm of finding groups $G$ and $\pi_1(\Galois{X})$, and then determine $\pi_1(\Galois{X})$ for each Galois covers of $X^i$, \ $i=1,2,3,4$.

\subsection{Projective degeneration}

We begin by defining a degeneration.
Let $\Delta$ denote the unit disc, and let $X, Y$ be projective algebraic surfaces.
		Let $p: Y \rightarrow \mathbb{CP}^2$ and $p': X \rightarrow \mathbb{CP}^2$
be generic projections.
We say that $p'$ is a projective degeneration of $p$
		if there is a flat family $\pi: V \rightarrow \Delta$
		and an embedding $F:V\rightarrow \Delta \times \mathbb{CP}^2$,
		such that $F$ composed with the first projection is $\pi$,
		and:
		\begin{itemize}
			\item[(a)] $\pi^{-1}(0) \simeq X$;
			\item[(b)] there is a $t_0 \neq 0$ in $\Delta$ such that
			$\pi^{-1}(t_0) \simeq Y$;
			\item[(c)] the family $V-\pi^{-1}(0) \rightarrow \Delta-{0}$
			is smooth;
			\item[(d)] restricting to $\pi^{-1}(0)$, $F = {0}\times p'$
			under the identification of $\pi^{-1}(0)$ with $X$;
			\item[(e)] restricting to $\pi^{-1}(t_0)$, $F = {t_0}\times p$
			under the identification of $\pi^{-1}(t_0)$ with $Y$.
		\end{itemize}
In the above process, we construct a flat degeneration of $ X $ into a union of planes. We consider degenerations with only two planes intersecting at a line, with each plane homeomorphic to $\mathbb{CP}^2$.

\subsection{Constructions and figures of degenerations}

Here we describe the four shadows $X^i$, $i = 1,2,3,4$, which as explained above are all of degree $8$ and have singularities of degree at most (and in $X^4$: precisely) $4$.

\begin{construction}[The shadow~$X^4$]

A triangulation of degree $8$ of the sphere is obtained by cutting the Euclidean sphere by three orthogonal planes passing through the origin, see \Fref{X4}. This triangulation has $6$ vertices which we denote $V_1,\dots,V_6$ and $12$ edges; the repeated edges, $5,6,8,9$, are glued in pairs.
We numerate the vertices and edges for future reference.

\forget
\begin{figure}[H]
  \begin{center}
    \definecolor{circ_col}{rgb}{0,0,0}
      \begin{tikzpicture}[x=1cm, y=1cm, scale=1]

    		\draw [fill=circ_col] (-4,0.5) node [above] {} circle (1pt);
    		\draw [fill=circ_col] (-2,0.1) node [below] {} circle (1pt);
    		\draw [fill=circ_col] (-3,2) node [above] {} circle (1pt);
    		\draw [fill=circ_col] (-3.4,-0.3) node [below] {} circle (1pt);
    		\draw [fill=circ_col] (-2.6,0.8) node [below] {} circle (1pt);

		% \draw [-, line width = 1pt] (-4,0.5) -- (-2,0.1);
		\draw [-, line width = 1pt, dashed] (-4,0.5) -- (-2.6,0.8) -- (-2,0.1);
		\draw [-, line width = 1pt, dashed] (-2.6,0.8) -- (-3,2);		
		\draw [-, line width = 1pt] (-4,0.5) -- (-3.4,-0.3);
		\draw [-, line width = 1pt] (-2,0.1) -- (-3.4,-0.3);
		\draw [-, line width = 1pt] (-2,0.1) -- (-3,2);
		\draw [-, line width = 1pt] (-4,0.5) -- (-3,2);
		\draw [-, line width = 1pt] (-3.4,-0.3) -- (-3,2);

                % \draw [-, line width = 1pt] (1,0) -- node [anchor=east]{$ 1 $} (1,1)

                \draw [<-|, line width =1pt] (-1.3,1) -- (-1.7,1);

\newcommand{\radiusx}{1}
\newcommand{\radiusy}{.3}
\newcommand{\height}{2}

\coordinate (a) at (-{\radiusx*sqrt(1-(\radiusy/\height)*(\radiusy/\height))},{\radiusy*(\radiusy/\height)});

\coordinate (b) at ({\radiusx*sqrt(1-(\radiusy/\height)*(\radiusy/\height))},{\radiusy*(\radiusy/\height)});

\draw[fill=gray!30, line width =1pt] (a)--(0,\height);
\draw[fill=gray!30, line width =1pt] (b)--(0,\height);

%\fill[gray!50] circle (\radiusx{} and \radiusy);

\begin{scope}
\clip ([xshift=-2mm]a) rectangle ($(b)+(1mm,-2*\radiusy)$);
\draw [line width =1pt] circle (\radiusx{} and \radiusy);
\end{scope}

\begin{scope}
\clip ([xshift=-2mm]a) rectangle ($(b)+(1mm,2*\radiusy)$);
\draw [line width =1pt] circle (\radiusx{} and \radiusy);
\end{scope}

\draw[line width =1pt] (0,\height)|-(0, -0.30); %(\radiusx,0);% node[right, pos=.25]{}; % node[above,pos=.75]{$r$};

%\draw (0,.15)-|(.15,0);
\end{tikzpicture}

\end{center}
\caption{Deformation of degree $4$ degeneration to a cone over $E_0$.}\label{fig_cone}
\end{figure}

The intersection of two such cones gives a deformation of~$X^4$ as described in Figure \ref{fig_double_cone}.

\begin{figure}[H]
\begin{center}
\begin{tikzpicture}[x=1cm, y=1cm, scale=1]
\newcommand{\radiusx}{1}
\newcommand{\radiusy}{.3}
\newcommand{\height}{2}

\coordinate (a) at (-{\radiusx*sqrt(1-(\radiusy/\height)*(\radiusy/\height))},{\radiusy*(\radiusy/\height)});

\coordinate (b) at ({\radiusx*sqrt(1-(\radiusy/\height)*(\radiusy/\height))},{\radiusy*(\radiusy/\height)});

\coordinate (c) at (-{\radiusx*sqrt(1-(\radiusy/\height)*(\radiusy/\height))},{\radiusy*(\radiusy/\height)*-1});
\coordinate (d) at ({\radiusx*sqrt(1-(\radiusy/\height)*(\radiusy/\height))},{\radiusy*(\radiusy/\height)*-1});

\draw[fill=gray!30, line width =1pt] (a)--(0,\height);
\draw[fill=gray!30, line width =1pt] (b)--(0,\height);
\draw[fill=gray!30, line width =1pt] (c)--(0,-\height);
\draw[fill=gray!30, line width =1pt] (d)--(0,-\height);

%\fill[gray!50] circle (\radiusx{} and \radiusy);

\begin{scope}
\clip ([xshift=-2mm]a) rectangle ($(b)+(1mm,-2*\radiusy)$);
\draw [line width =1pt] circle (\radiusx{} and \radiusy);
\end{scope}

\begin{scope}
\clip ([xshift=-2mm]a) rectangle ($(b)+(1mm,2*\radiusy)$);
\draw [line width =1pt] circle (\radiusx{} and \radiusy);
\end{scope}

\draw[line width =1pt] (0,\height)|-(0, -0.30); %(\radiusx,0);% node[right, pos=.25]{}; % node[above,pos=.75]{$r$};
\draw[line width =1pt] (0,-\height)|-(0, -0.30);
%\draw (0,.15)-|(.15,0);
\end{tikzpicture}
\end{center}
\caption{Two cones intersecting along $E_0$.}\label{fig_double_cone}
\end{figure}
\forgotten

\begin{figure}[H]
$$\xymatrix@R=30pt@C=30pt{
{}  & V_5 \ar@{-}[dl]|9 \ar@{-}[d]|2 \ar@{-}[dr]|5 & {} & {} & V_5 \ar@{-}[dl]|5 \ar@{-}[d]|{10} \ar@{-}[dr]|9 & {} \\
V_6 \ar@{-}[r]|1 \ar@{-}[dr]|8 & V_1 \ar@{-}[r]|4 \ar@{-}[d]|3 & V_3 \ar@{-}[dl]|6 & V_3 \ar@{-}[r]|7 \ar@{-}[dr]|6 & V_4 \ar@{-}[r]|{12} \ar@{-}[d]|{11} & V_6 \ar@{-}[dl]|8 \\
{}  & V_2 & {} & {} & V_2 & {}
}$$
\caption{Degree $4$ degenerations with four identified edges give $X^4$}\label{X4}
\end{figure}
\end{construction}

It is worth noting that
following \cite[Corollary 12.2]{2008}, each degree~$4$ degeneration can be deformed to a cone over an elliptic curve $E_0$, where $E_0$ is a curve of degree~$4$ in $\mathbb{P}^3$.
%, as depicted in Figure \ref{fig_cone}.
The  Zappatic singularities  of type~$E_4$ are obtained as the intersection of~$4$ lines. For more details about such surfaces,  see \cite{ZAPP}.

Now consider the two connected components of \Fref{X4}, without the gluing. We will construct $X^1$, $X^2$, and $X^3$ by gluing one pair, then two pairs, and eventually, three pairs of edges.

\begin{construction}[The shadow~$X^1$]

The shadow $X^1$ appears in Figure \ref{X1}. It has~$8$ vertices and~$15$ edges, of which only~$9$ edges are non-redundant (the redundant edges, those belonging to a single triangle, are not numerated, as they do not appear in the computation. Edge $5$ appears twice in the figure, and the two copies are of course glued.

Notice that the degree of $V_1$ and $V_2$ is $4$; the degree of $V_3$ and $V_4$ is $3$; and the other vertices have degree $1$.
\end{construction}

\begin{figure}[H]
$$\xymatrix@R=30pt@C=30pt{
{}  & V_3 \ar@{-}[dl]|{} \ar@{-}[d]|2 \ar@{-}[dr]|5 & {} & {} & V_3 \ar@{-}[dl]|5 \ar@{-}[d]|{7} \ar@{-}[dr]|{} & {} \\
V_5 \ar@{-}[r]|1 \ar@{-}[dr]|{} & V_1 \ar@{-}[r]|4 \ar@{-}[d]|3 & V_4 \ar@{-}[dl]|{} & V_4 \ar@{-}[r]|6 \ar@{-}[dr]|{} & V_2 \ar@{-}[r]|{9} \ar@{-}[d]|{8} & V_8 \ar@{-}[dl]|{} \\
{}  & V_6 & {} & {} & V_7 & {}
}$$
	\caption{Degree $4$ degenerations with one identified edge give $X^1$}\label{X1}
\end{figure}

We now glue in $X^1$ the edge connecting the vertices $V_4$ and $V_6$ with the edge connecting the vertices $V_4$ and $V_7$, to obtain $X^2$ (the enumeration of the vertices and edges changes).

\begin{construction}[Degeneration $X^2$]

The degeneration $X^2$ appears in Figure \ref{X2}.
This shadow has $7$ vertices and $10$ non-redundant edges. Edges $5$ and $6$ appear twice, and the two copies are being glued. The degree of $V_1$, $V_2$, and $V_3$ is $4$; the degree of $V_4$ and $V_5$ is $3$; and the degree of $V_6$ and $V_7$ is $1$.
\begin{figure}[H]
$$\xymatrix@R=30pt@C=30pt{
{}  & V_4 \ar@{-}[dl]|{} \ar@{-}[d]|2 \ar@{-}[dr]|5 & {} & {} & V_4 \ar@{-}[dl]|5 \ar@{-}[d]|{8} \ar@{-}[dr]|{} & {} \\
V_6 \ar@{-}[r]|1 \ar@{-}[dr]|{} & V_1 \ar@{-}[r]|4 \ar@{-}[d]|{3} & V_2 \ar@{-}[dl]|{6} & V_2 \ar@{-}[r]|7 \ar@{-}[dr]|{6} & V_3 \ar@{-}[r]|{10} \ar@{-}[d]|{9} & V_7 \ar@{-}[dl]|{} \\
{}  & V_5 & {} & {} & V_5 & {}
}$$
	\caption{Degree $4$ degenerations with two identified edges give $X^2$}\label{X2}
\end{figure}
\end{construction}

We now glue the edge connecting $V_5$ and $V_6$ with the edge connecting $V_5$ and $V_7$ (in the numeration of $X^2$) to obtain $X^3$.

\begin{construction}[The shadow $X^3$]
There are now $6$ vertices and $11$ non-redundant edges, see \Fref{X3}.
The degrees of $V_1$, $V_2$, $V_3$, and $V_4$ are $4$; and the degrees of $V_5$ and $V_6$ are $3$.

\begin{figure}[H]
$$\xymatrix@R=30pt@C=30pt{
{}  & V_5 \ar@{-}[dl]|{} \ar@{-}[d]|2 \ar@{-}[dr]|5 & {} & {} & V_5 \ar@{-}[dl]|5 \ar@{-}[d]|{9} \ar@{-}[dr]|{} & {} \\
V_6 \ar@{-}[r]|1 \ar@{-}[dr]|8 & V_1 \ar@{-}[r]|4 \ar@{-}[d]|3 & V_3 \ar@{-}[dl]|6 & V_3 \ar@{-}[r]|7 \ar@{-}[dr]|6 & V_4 \ar@{-}[r]|{11} \ar@{-}[d]|{10} & V_6 \ar@{-}[dl]|8 \\
{}  & V_2 & {} & {} & V_2 & {}
}$$
	\caption{Degree $4$ degenerations with three identified edges give $X^3$}\label{X3}
\end{figure}
\end{construction}

One can also depict $X^1$, $X^2$, and $X^3$ as planar shadows, without gluing, as shown in \Fref{X123}. We maintain the numeration of the vertices and edges for each shadow as given above. To obtain $X^4$ from $X^3$ we must glue the two edges connecting $V_5$ and $V_6$ (one from the triangle $V_5V_6V_4$ and one from $V_5V_6V_1$), and the resulting shadow is no longer planar.

\begin{figure}[H]
$$
\xymatrix@R=20pt@C=20pt{
{} & {} & V_8 \ar@{-}[dl]
\ar@{-}[d]|{9}
\ar@{-}[dr]
& {} \\
{} & V_3 \ar@{-}[r]|{7}
\ar@{-}[dl]
\ar@{-}[d]|{2}
\ar@{-}[dr]|{5}
& V_2 \ar@{-}[r]|{8} \ar@{-}[d]|{6} & V_7 \ar@{-}[dl] \\
V_5 \ar@{-}[r]|{1} \ar@{-}[dr]& V_1 \ar@{-}[d]|{3} \ar@{-}[r]|{4} & V_4 \ar@{-}[dl] & {} \\
{} & V_6& {} & {}
}
\qquad
\xymatrix@R=20pt@C=20pt{
{} & {} & V_7 \ar@{-}[dl]
\ar@{-}[d]|{10}
\ar@/^4ex/@{-}[dddr]
& {} \\
{} & V_4 \ar@{-}[r]|{8}
\ar@{-}[dl]
\ar@{-}[d]|{2}
\ar@{-}[dr]|{5}
& V_3 \ar@{-}[d]|{7} \ar@/^2ex/@{-}[ddr]|{9} & {}
 \\
V_6 \ar@{-}[r]|{1} \ar@/_4ex/@{-}[drrr]& V_1 \ar@/_2ex/@{-}[drr]|{3} \ar@{-}[r]|{4} & V_2 \ar@{-}[dr]|{6} & {} \\
{} & {} & {} & V_5
}
\qquad
\xymatrix@R=20pt@C=20pt{
V_5 \ar@{-}[r]|{9}
\ar@{-}[d]|{2}
\ar@{-}[dr]|{5}
& V_4 \ar@{-}[d]|{7} \ar@/^2ex/@{-}[ddr]|{10} & {}
&{} \\
V_1 \ar@/_2ex/@{-}[drr]|{3} \ar@{-}[r]|{4} & V_3 \ar@{-}[dr]|{6} & {} &{} \\
{} & {} & V_2 &{} \\
{} & {} & {} &
V_6
\ar@{-}[ul]|{8}
\ar@/^14ex/@{-}[uuulll]
\ar@/_14ex/@{-}[uuulll]
\ar@/_4ex/@{-}[uuull]|{11}
\ar@/^4ex/@{-}[uulll]|{1}
}
\forget
\qquad
\xymatrix@R=20pt@C=20pt{
V_6
\ar@/^10ex/@{-}[dddrrr]|{8}
\ar@/_10ex/@{-}[dddrrr]|{8}
\ar@/^2ex/@{-}[drr]|{11}
\ar@/_1ex/@{-}[dr]
\ar@/^1ex/@{-}[dr]
\ar@/_2ex/@{-}[ddr]|{1}
& {} & {} & {}
\\
 {} & V_5 \ar@{-}[r]|{9}
\ar@{-}[d]|{2}
\ar@{-}[dr]|{5}
& V_4 \ar@{-}[d]|{7} \ar@/^2ex/@{-}[ddr]|{10} & {}
 \\
{} & V_1 \ar@/_2ex/@{-}[drr]|{3} \ar@{-}[r]|{4} & V_3 \ar@{-}[dr]|{6} & {} \\
{} & {} & {} & V_2
}
\forgotten
$$
\caption{Alternative presentations for the shadows $X^1$, $X^2$ and $X^3$.}\label{X123}
\end{figure}

\subsection{Fundamental groups}

We now describe how the  fundamental groups for  $X^i$, $i=1,2,3,4$, are computed from the diagrams. Let $X$ be an  algebraic surface of degree~$n$ embedded in some projective space, with a generic projection $f \co X \rightarrow \C\P^2$ of degree $n$. Let
$$\Galois{X}=\overline{(X \times_{\mathbb{CP}^{2}} \ldots \times_{\mathbb{CP}^{2}} X)-\triangle},$$
be the Galois cover of $X$, where the product is taken $n$ times, and $\triangle$ is the extended diagonal that is defined as
$$\triangle=\{(x_1,\dots,x_k)\in X^k\bigm| x_i=x_j\quad\text{for some}\quad i\ne j\}.$$

The projection has a branch curve $S$, and we turn to compute  $G=\pi_1(\mathbb{CP}^2-S)$ and $\pi_1(\Galois{X})$.
The degeneration of $X$ into a union of planes has a branch curve which is a line arrangement of degree $q$, the number of edges in the degeneration.

The {\bf{vertices}} of the line arrangement are the points where more than two lines intersect. The multiplicity of a vertex is the number of lines $k$ that intersect at the point. In this work, we have singularities with multiplicities $1$, $3$, and $4$.
There is a regeneration process (explained, for example, in \cite{degree6}), which ``opens'' each of the lines to either a double line or a conic. We obtain a local description for the regenerated curve~$S$, which is a cuspidal curve of degree $2q$.
To compute the presentation we first must choose a global ordering of the edges in the shadow. Then, the relations arising from each vertex  depend on the local ordering at this point.
Now, the Van Kampen Theorem \cite{VK} states that group $G = \pi_1(\CP^2-S)$ is generated by $\G_i, \G_{i'}$ for $i=1, \dots, q$, where each generator represents an edge between two planes in the degeneration, subject to relations of the following five types.

\begin{enumerate}
\item for every branch point of a conic, a relation of the form $\gamma = \gamma'$ where $\gamma,\gamma'$ are certain conjugates of the generators $\G_{j}$ and $\G_{j'}$,
\item\label{vK2} for every node,
$[\gamma,\gamma'] = 1$ where $\gamma,\gamma'$ are certain conjugates of $\gamma_i$ and $\gamma_j$, where $i,j$ are the lines meeting in this node,
\item for every cusp, $\langle\gamma,\gamma'\rangle=1$ where $\gamma,\gamma'$ are as in \eq{vK2},
\item\label{vK4} the ``projective relation'' $\prodl_{i=d}^1  \G_{i'}\G_i = 1$,
\item[(5*)] a commutator relation for every pair of edges which meet in the branch curve.
\end{enumerate}

As a consequence, other than the projective relation, the relations can be computed locally. In other words, $G$ is an amalgamated product of ``local groups'', one for each vertex, generated by the generators corresponding to the edges meeting in the vertex --- modulo the projective relation of \eq{vK4} and the relations of type (5*). The amalgamation is done by identifying the generators associated to the same edge, each appearing in the two local groups of the vertices of this edge.

The following lemmas summarize the local groups for vertices of multiplicity $3$ or $4$ after regeneration, for a particular order of the edges. Our numeration of the edges in each of the shadows $X^i$ was chosen so that locally we only see the orderings appearing in these lemmas.

In Figure \ref{fig_lemma3pt_outer} we have an intersection of three lines, being globally ordered as $ i < j < k $.
\begin{figure}[H]
\[
\xymatrix@R=30pt@C=30pt{
{} &{} & \\
{} &*={\bullet} \ar@{}[r] \ar@{}[l] \ar@{-}[ur]|i \ar@{-}[ul]|k \ar@{-}[u]|j&
}
\]
	\caption{Intersection of three lines}\label{fig_lemma3pt_outer}
\end{figure}

\begin{lemma}\label{Lemma3pt}
For each vertex whose numeration is as in \Fref{fig_lemma3pt_outer}, the relations induced on $G$ are:
\uThreePointOuter{p}{i}{j}{k}{3pt-rels}
\end{lemma}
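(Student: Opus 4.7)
The plan is to derive the five stated relations by computing the braid monodromy at the 3-point $p$ of \Fref{fig_lemma3pt_outer} after regeneration, and then translating each braid factor into a group-theoretic relation via the Van Kampen theorem.

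First, I would apply the regeneration procedure to the line arrangement in a neighbourhood of $p$. Each of the three lines $i$, $j$, $k$ opens into a conic, whose two tangents are labelled $\G_{\ell},\G_{\ell'}$ for $\ell\in\{i,j,k\}$; the triple intersection at $p$ then regenerates into a configuration of six strands meeting pairwise in nodes, cusps and branch points. For the outer position shown in the figure and for the chosen global ordering $i<j<k$, the braid monodromy $\widetilde{\Delta}_p$ around $p$ can be written as an explicit product of elementary braids of the form $Z$, $Z^2$ and $Z^3$, each conjugated by the transition braids coming from the factors that precede it in the factorization. Compact expressions of this type have been recorded for 3- and higher-points in the regeneration papers underlying \cite{ALV,RTV} and the references in \cite{degree6}; specialising to the outer 3-point is what the proof needs.

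Next, the Van Kampen theorem (items~(1)--(3) of the list in \Sref{section:method}) translates each factor into a relation: a linear braid gives a branch relation $\gamma=\gamma'$, a squared braid gives a commutator $[\gamma,\gamma']=e$, and a cubed braid gives a triple relation $\langle\gamma,\gamma'\rangle=e$. Unwinding this for the outer 3-point, the cusp factor attached to the $(j',\{k,k'\})$-interaction gives the three triple relations of the first equation; the branch factor on the $j$-conic gives the conjugation identity $\G_j=\G_{k'}\G_k\G_{j'}\G_k^{-1}\G_{k'}^{-1}$; the cusp factors attached to $(i,\{j',k,k'\})$, after being conjugated through the $k$-conic, yield the three triple relations of the third equation; the branch factor on the $i$-conic produces the long nested expression for $\G_{i'}$; and finally the node factors between the $i$-conic and the $k$-conic give the four commutators of the last equation.

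The principal obstacle is not the Van Kampen reading of any single factor, but the systematic bookkeeping of nested conjugations: each factor of $\widetilde{\Delta}_p$ acts on those following it, so that by the time one reaches the cusps involving $\G_i$ and the initially placed $\G_{j'}$, the latter has already been conjugated by the braids coming from the $k$-conic. This is precisely what produces $\G_{k'}\G_k\G_{j'}\G_k^{-1}\G_{k'}^{-1}$ in the place of $\G_j$, and the still longer expressions in equations (3) and (4). Since the local geometry of an outer 3-point and the ordering $i<j<k$ are standard, the calculation parallels the analogous local lemmas in \cite{degree6,Nonplanar1} and reduces to verifying that the exponents and conjugators appearing in the monodromy factorization match the five relations displayed in the statement.
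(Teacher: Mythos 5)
The paper itself gives no proof of \Lref{Lemma3pt}: it is quoted as a summary of the standard local regeneration and braid-monodromy computations (in the Moishezon--Teicher tradition, cf.\ \cite{degree6} and the references there), and your overall strategy --- regenerate the local arrangement, write down the local braid monodromy factorization, and translate each factor through the Van Kampen dictionary (branch point $\mapsto$ $\gamma=\gamma'$, node $\mapsto$ commutator, cusp $\mapsto$ triple relation) --- is exactly the framework that underlies the statement, so the method is the right one.

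However, your concrete dictionary between braid factors and the displayed relations does not match the lemma, and since you never write the factorization itself, nothing in the proposal would catch the mismatch. With the ordering $i<j<k$ of \Fref{fig_lemma3pt_outer}, the first cusp set \eq{3pt-rels-2} pairs $\G_{i'}$ with $\G_j,\G_{j'}$ (not $\G_{j'}$ with $\G_k,\G_{k'}$); the first branch relation \eq{3pt-rels-3} expresses $\G_i$ as $\G_{j'}\G_j\G_{i'}\G_j^{-1}\G_{j'}^{-1}$, whereas the identity you propose to derive, $\G_j=\G_{k'}\G_k\G_{j'}\G_k^{-1}\G_{k'}^{-1}$, is not among the lemma's relations; the second cusp set \eq{3pt-rels-4} pairs $\G_k$ with conjugates of $\G_j$ and $\G_{j'}$ (not $\G_i$ with $\G_{j'},\G_k,\G_{k'}$); and the long branch relation \eq{3pt-rels-5} determines $\G_{k'}$, not $\G_{i'}$. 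Only the attribution of the commutators \eq{3pt-rels-6} to the node factors between the $i$- and $k$-pairs is correct. In effect you have shifted the roles of $i,j,k$ (taken the wrong line as the ``middle'' one): locally only lines $i$ and $k$ contribute branch-point relations at this vertex, while $j$ enters only through its doubled pair, so the claim that all three lines contribute a conic whose branch factor is read off here is also inaccurate. Since this bookkeeping is precisely what you identify as the main difficulty, and it is exactly where your stated assignments go wrong, the proposal as written would derive a different set of relations and does not establish the lemma; it needs the explicit local factorization, with the frozen ordering of \Fref{fig_lemma3pt_outer}, checked factor by factor against \eq{3pt-rels-2}--\eq{3pt-rels-6}. (A minor point: \cite{ALV,RTV} concern Coxeter and Artin covers, not regeneration braid monodromy; the relevant sources are the Moishezon--Teicher regeneration papers and \cite{degree5,degree6}.)
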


Figure \ref{4pt} depicts an intersection of four lines, being globally ordered as $ i < j < k < l$.
\begin{figure}[H]
\[
\xymatrix@R=30pt@C=30pt{
{} &{} & \\
{} &*={\bullet} \ar@{-}[r]|k \ar@{-}[l]|j \ar@{-}[d]|i \ar@{-}[u]|{\,l\,} & \\
{} &{} &
}
\]
	\caption{Intersection of four lines}\label{4pt}
\end{figure}

\begin{lemma}\label{Lemma4pt}
A point that is an intersection of four lines contributes to $G$ the following list of relations:
\uFourPointInner{q}{i}{j}{k}{l}{4pt-rels}
\end{lemma}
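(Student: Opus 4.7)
The plan is to compute the local braid monodromy $\widetilde{\Delta}_q$ at the 4-point after the regeneration of the branch curve, and then apply the Van Kampen theorem to read off the twelve relations. This follows the standard methodology of Moishezon--Teicher, as refined for vertices of high multiplicity in \cite{ALV} and \cite{RTV}.

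First, I would apply the regeneration rules to the configuration of four concurrent lines globally ordered as $i<j<k<l$ (see \Fref{4pt}). Each line is replaced by a conic, contributing a pair of generators $\G_i,\G_{i'}$, so that the local branch curve becomes a configuration of four conics with tangency points and nodes prescribed by the regeneration. The total local braid monodromy then factors as an ordered product indexed by the six pairs $\{i,j\},\{i,k\},\{i,l\},\{j,k\},\{j,l\},\{k,l\}$, with factors of the form $Z_{a\,a',b\,b'}^{m}$ (or their conjugates by earlier factors), where $m=2$ for a node and $m=3$ for a cusp in the regenerated picture.

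Next, I would translate each braid factor into a relation via the Van Kampen dictionary: a square $Z^{2}$ yields a commutator, a cube $Z^{3}$ yields a triple (braid) relation $\langle\gamma,\gamma'\rangle=e$, and a conjugation $(Z)^{B}$ corresponds to conjugating the underlying generators by the braid-induced automorphism of the free group. The ``adjacent'' pairs $\{j,k\}$ and $\{k,l\}$ produce the triple relations (\ref{4pt-rels-1}), (\ref{4pt-rels-2}), (\ref{4pt-rels-5}), (\ref{4pt-rels-6}); the ``mixed'' pairs $\{i,l\},\{j,l\},\{i,k\}$ yield the commutator relations (\ref{4pt-rels-3}), (\ref{4pt-rels-4}), (\ref{4pt-rels-7}), (\ref{4pt-rels-8}); and the deepest nested conjugations account for (\ref{4pt-rels-9})--(\ref{4pt-rels-12}), which describe how generators associated with the ``upper'' lines $k,l$ are transported, via the braid action, through the branch points of the ``lower'' lines $i,j$.

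The main obstacle is the careful bookkeeping of these nested conjugations, particularly for relations (\ref{4pt-rels-9})--(\ref{4pt-rels-12}), where one must correctly propagate each generator through every preceding factor of the monodromy factorization. Fortunately, the combinatorics of the 4-point regeneration has been worked out in the references above, and the present statement is a specialization of that framework to the ordering $i<j<k<l$ encoded in our numeration. A final sanity check is provided by the symmetry between the pairs $\{i,j\}$ and $\{i',j\}$ (obtained by swapping within a conic), which exchanges (\ref{4pt-rels-1}) with (\ref{4pt-rels-5}) and (\ref{4pt-rels-9}) with (\ref{4pt-rels-11}), exactly as it should given the local structure of the regenerated branch curve.
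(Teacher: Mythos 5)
Your high-level plan --- regenerate the $4$-point, compute the local braid monodromy factorization, and read off the relations via the Van Kampen theorem --- is indeed the machinery this lemma summarizes; the paper gives no proof of its own, quoting the local computation from the regeneration literature (the relevant source is \cite{degree6} and the Moishezon--Teicher papers cited there, \emph{not} \cite{ALV} or \cite{RTV}, which are purely group-theoretic papers on Coxeter and Artin covers and contain no braid-monodromy or regeneration computations). But the execution you sketch would not produce the stated list. A factorization consisting only of factors $Z^{2}$ (nodes) and $Z^{3}$ (cusps) indexed by the six pairs --- even after arbitrary conjugation by earlier factors --- can only yield commutator and triple relations under the Van Kampen dictionary; it can never yield \eq{4pt-rels-9}--\eq{4pt-rels-12}, which equate a conjugate of $\G_j$ or $\G_{j'}$ with a conjugate of $\G_{k'}$ or of $\G_{k'}\G_k\G_{k'}^{-1}$. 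Those arise from degree-one branch-point factors (half-twists of the form $Z_{a\,b}$, suitably conjugated), which your description of the factorization omits even though you verbally attribute \eq{4pt-rels-9}--\eq{4pt-rels-12} to branch points; as written, the two halves of your argument are inconsistent.

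The pair-to-relation bookkeeping is also wrong, and this is not cosmetic, because that bookkeeping is the entire content of the lemma. In the stated list the cusp relations \eq{4pt-rels-1}, \eq{4pt-rels-5} involve the pair $\{i,j\}$ and \eq{4pt-rels-2}, \eq{4pt-rels-6} the pair $\{k,l\}$ --- not $\{j,k\}$; all four commutator relations \eq{4pt-rels-3}, \eq{4pt-rels-4}, \eq{4pt-rels-7}, \eq{4pt-rels-8} pair conjugates of $\G_i,\G_{i'}$ against conjugates of $\G_l,\G_{l'}$, i.e.\ only the diagonal pair $\{i,l\}$ (the pairs $\{i,k\}$ and $\{j,l\}$ contribute no local relations in this lemma; commutators of that kind are the parasitic relations of type (5*), recorded separately in the paper); and it is precisely the pair $\{j,k\}$ that yields the branch-point identifications \eq{4pt-rels-9}--\eq{4pt-rels-12}. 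Carrying out your plan with your assignment would therefore output a different set of relations. To close the gap, either cite the explicit local factorization for the regenerated $4$-point from \cite{degree6} and check that the ordering $i<j<k<l$ of \Fref{4pt} matches the one used there, or write the factorization out in full --- branch-point factors and nested conjugations included --- and only then apply Van Kampen to recover \eq{4pt-rels-1}--\eq{4pt-rels-12}.
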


\subsection{The dual graph}\label{ss:CYT}

Group $G$ comes with a natural projection to the symmetric group $S_n$, where each generator $\gamma_i$ or $\gamma_{i'}$ is mapped to the transposition switching the planes meeting in edge number~$i$.

{}From a presentation of $G$ we can find a presentation of $\pi_1(\Galois{X})$, using the exact sequence
\begin{equation}\label{Gal}
	0 \rightarrow \pi_1(\Galois{X}) \rightarrow
G/\nsg{\gamma^2} \rightarrow S_n \rightarrow 0;
	\end{equation}
this can be done using the Reidmeister-Schreier method; however, the number of generators gets multiplied by $n!$, so one must  find ways to make the computation manageable.

Consider a shadow, such as $X^i$ for $i = 1,2,3,4$. We now define a map from the  fundamental group $G/\nsg{\gamma^2}$ to the symmetric group $S_n$, where $n$ is the number of $2$-cells in the shadow. Recall that $G/\nsg{\gamma^2}$ is generated by  $\gamma_i$ and $\gamma_{i'}$, one pair for each nonredundant edge of the shadow. The map to $S_n$ is then defined by mapping both $\gamma_i$ and $\gamma_{i'}$ to the transposition $(ab)$, where $a,b$ are the triangles meeting in the common edge $i$.

A set of transpositions in $S_n$ can be encoded as the edges of a graph $T$ on $n$ vertices. The transpositions generate the whole group if and only if $T$ is connected. Furthermore, $S_n$ can be presented on the given transpositions with respect to a certain set of relations. Group $C_Y(T)$, which we define below, is defined by imposing the ``local'' relations, and so there is a projection $C_Y(T) \rightarrow S_n$. In all cases studied so far, it was possible to prove that the generators of $G/\nsg{\gamma^2}$ satisfy the defining relations of  $C_Y(T)$.

The {\bf{dual graph}} of the shadow is graph $T$, whose vertices are the $2$-cells, with an edge connecting every intersecting pair of $2$-cells. For example, the dual graph of $X^1$ (from \Fref{X1}), is given in \Fref{dualX1}.

Because the product of two  transpositions is of order $2$ or $3$, depending on whether they intersect, we are led to associate to the dual graph $T$ an abstract group $C_Y(T)$ defined as follows.

\begin{Def}\label{def:c_y_t}
Let $T$ be a connected graph, not necessarily simple, but without loops. Let $C(T)$ be the Coxeter group whose generators are the edges of $T$, subject to three types of relations: $u^2 = 1$ for every generator; $(uv)^2=1$ if the edges corresponding to $u,v$ are disjoint; and $(uv)^3 = 1$ if $u,v$ intersect in one vertex. (No relation is assumed if $u,v$ connect the same two vertices).

Furthermore, we define the group $C_Y(T)$ as the quotient of $C(T)$ with respect to four types of relations:
\begin{eqnarray}
{}[w u w, v] & = & e
    \qquad \mbox{if $u,v,w$ are as in Figure \ref{0knifeX} (left)}, \label{CYT1}\\
{} \langle w u w,{v} \rangle & = & e
    \qquad \mbox{if $u,v,w$ are as in Figure \ref{0knifeX} (right)},
    \label{CYT2}\\
{}[w u w, v x v] & = & e
    \qquad \mbox{if $x,u,v,w$ are as in Figure \ref{0plateX} (left)},
    \label{CYT3} \\
{}\langle w u w ,v xv \rangle & = & e
    \qquad \mbox{if $x,u,v,w$ are as in Figure \ref{0plateX} (right)}
% \textcolor{red}{why \ do \ we \ need \ this \ one?}. --- because we want to be consistent with other situations where this relation is needed.
\label{CYT4}
\end{eqnarray}

\begin{figure}%[!h]
\begin{equation*}
\xy
    (1,0);(16,0)    **\crv{\dir{-}} ?(0)*\dir{*}             +(8,2)*{u};
    (16,0);(28,10)  **\crv{\dir{-}} ?(0)*\dir{*}?(1)*\dir{*} +(-7,-3)*{v};
    (16,0);(28,-10) **\crv{\dir{-}} ?(1)*\dir{*}             +(-7,3)*{w};
\endxy
\qquad
\xy
    (0,0);(16,0)    **\crv{\dir{-}} ?(0)*\dir{*}             +(8,2)*{u};
    (16,0);(32,0)  **\crv{(24,10)} ?(0)*\dir{*}?(1)*\dir{*} +(-8,7)*{v};
    (16,0);(32,0)  **\crv{(24,-10)} ?(0)*\dir{*}?(1)*\dir{*} +(-8,-7)*{w};
 \endxy
\end{equation*}
\caption{}\label{0knifeX}
\end{figure}

\begin{figure}[!h]
\begin{equation*}
\xy
    (4,0);(16,0)    **\crv{\dir{-}} ?(0)*\dir{*}             +(6,2)*{u};
    (16,0);(32,0)  **\crv{(24,10)} ?(0)*\dir{*}?(1)*\dir{*} +(-8,7)*{v};
    (16,0);(32,0)  **\crv{(24,-10)} ?(0)*\dir{*}?(1)*\dir{*} +(-8,-7)*{w};
    (32,0);(44,0) **\crv{\dir{-}} ?(1)*\dir{*}             +(-6,2)*{x};
\endxy
\qquad
\xy
    (16,0);(32,0)  **\crv{(24,10)} ?(0)*\dir{*}?(.5)*\dir{*}?(1)*\dir{*} +(-3,5)*{x} +(-10,0)*{u};
    (16,0);(32,0)  **\crv{(24,-10)} ?(0)*\dir{*}?(1)*\dir{*} +(-8,-7)*{w};
    (16,0);(32,0) **\crv{\dir{-}} ?(1)*\dir{*}             +(-8,1.5)*{v};
\endxy
\end{equation*}
\caption{}\label{0plateX}
\end{figure}
\end{Def}

It is easy to see that if we map each generator to the transposition, switching its two vertices, all the relations defining $C_Y(T)$ hold in $S_n$. We thus have a well-defined projection $C_Y(T) \rightarrow S_n$, where $n$ is the number of vertices.

It is not hard to see that if $T$ is a tree, then $C_Y(T)$ is isomorphic to $S_n$. In general, the transpositions arranged in a cycle satisfy another relation, the ``cyclic relation'', which does not hold in~$C_Y(T)$. This was used to compute the kernel of the map $C_Y(T) \rightarrow S_n$, as we now explain.

\subsection{The group $S_n \ltimes A_{t,n}$}\label{Vishne}

The kernel of the map $C_Y(T) \rightarrow S_n$ was computed in \cite{RTV}, when $T$ is a simple graph. For a planar graph $T$, this was extended in \cite{ALV} to a similarly defined group $A_Y(T)$, which naturally covers Artin's braid group $B_n$; the kernel of the map $A_Y(T) \rightarrow B_n$ was computed there. From this result one can deduce the structure of the kernel of $C_Y(T) \rightarrow S_n$ for any planar graph $T$, simple or not. We believe that the same description holds for any graph (see \cite{NC} for details), but this is not needed for the current paper.
% MSc thesis of Noa Cohen, Bar Ilan University, forthcoming

Let $t \geq 0$ and $n$ be natural numbers. Recall the definition of group $A_{t,n}$ from \cite{RTV}. Let $U : = \{u, u',\dots \}$ be a set consisting of $t$ elements.

\begin{Def}
Group $A_{t,n}$ is generated by the $n^2\card{U}$ elements $u_{x,y}$, for $u \in U$ and $x,y \in \set{1,\dots,n}$, satisfying the following relations for any $u,u' \in U$ and any $x,y,z$:
 \begin{enumerate}
 \item $u_{x,x} = e$
 \item $u_{x,y}u_{y,z} = u_{x,z} = u_{y,z}u_{x,y}$
 \item $[u_{x,y},u_{w,z}'] = e$ \quad \textit{(for distinct $x,y,w$ and $z$)}
 \end{enumerate}
\end{Def}

Another description of the same group is provided by the following.
\begin{Def}\label{Ftndef}
For $x \in \set{1,\dots,n}$, let $F^{(x)}$ denote the free group on $t$ letters $u_x$ $(u \in U)$. Put $F_{t,n}^* := \prod\limits_{x\in \set{1,\dots,n}} F^{(x)}$. Map $\textit{ab} : F^{(x)} \rightarrow \mathbb{Z}^t$, defined by $\textit{ab}(u_x) = u$, (where $\mathbb{Z}^t$ is thought of as a free Abelian group generated by $U$), can be extended naturally to give map $F_{t,n}^* \rightarrow \mathbb{Z}^t$. We define the kernel of this map to be $F_{t,n}$ \cite[Definition 5.6]{RTV}.
\end{Def}

The main result of \cite{RTV} is that  $C_Y(T) \cong S_n \ltimes A_{t,n}$ where $n$ is the number of vertices in $T$ and $t$ is the rank of $\pi_1(T)$. The symmetric group acts on $A_{t,n}$, by its action on the indices. It is also verified in \cite[Theorem~5.7]{RTV} for $n \geq 5$, that $A_{t,n} \cong F_{t,n}$. In particular, there is a short exact sequence $$1 \longrightarrow F_{t,n} \longrightarrow C_Y(T) \longrightarrow S_n \longrightarrow 1.$$

\section{Results}\label{results}
In this section we describe the groups $\pi_1(\Galois{X^i})$ for each $i=1,2,3,4$.
We apply the algorithm and methods described in Section \ref{section:method}.

%In this section we do NOT repeat the methods from the previous section. We try to be as short as possible.

We have the following notation that will be used in commutation relations.
\begin{notation}
A formula involving  $\G_{i,i'}$ is a shorthand for all the formulas that are obtained by replacing this term with $\G_i$ or $\G_{i'}$.
\end{notation}
\forget
In the following definition we give a convention for sets of indices in order to write commutations in a shorter manner.
\begin{Def}\label{short}
\[
\{i_1,i_2,\dots,i_k\}_{j_1,j_2,\dots,j_m} := \{{i_l}_{j_t}: l =1,\dots,k\ \text{and}\ t = 1,\dots, m\}
\]
and,
\begin{eqnarray*}
\{\{i_1,i_2,\dots,i_{k_1}\}_{j_1,j_2,\dots,j_{m_1}}, \{i_{k_1+1},\dots,i_{k_2}\}_{j_{m_1+1},\dots,j_{m_2}}, \dots,
\{i_{k_{n-1}+1},\dots,i_{k_n}\}_{j_{m_{n-1}+1},\dots,j_{m_n}}\} \cr
= \cup_{p=0}^{n-1} \{i_{k_p+1},\dots,i_{k_{p+1}}\}_{j_{m_p+1},\dots,j_{m_{p+1}}},
\end{eqnarray*}
where $k_0$ and $m_0$ we define to be $0$.
\end{Def}
\forgotten

\subsection{The surface $X^1$}

\begin{thm}\label{PropX1}
The fundamental group of the Galois cover of $X^1$ is trivial.
\end{thm}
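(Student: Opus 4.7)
The plan is to apply the Van Kampen Theorem~\cite{VK} to the regenerated branch curve $S$ of $X^1$, producing a presentation of $G := \pi_1(\CP^2 - S)$ on the $18$ generators $\gamma_1, \gamma_{1'}, \dots, \gamma_9, \gamma_{9'}$ associated with the nine non-redundant edges of \Fref{X1}; then to pass to the quotient $\widetilde{G} := G/\nsg{\gamma^2}$ and identify $\pi_1(\Galois{X^1})$ as the kernel of the natural projection $\widetilde{G} \to S_8$ supplied by \eqref{Gal}. To prove that this kernel is trivial it is enough to establish $|\widetilde{G}| = 8! = 40320$, since the projection is always surjective.

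Concretely, I would first collect all local relations. The shadow $X^1$ has exactly two $4$-points, $V_1$ (incident to edges $1,2,3,4$) and $V_2$ (incident to edges $6,7,8,9$), to which I would apply \Lref{Lemma4pt}; and two $3$-points, $V_3$ (incident to edges $2,5,7$) and $V_4$ (incident to edges $4,5,6$), to which I would apply \Lref{Lemma3pt}; the local orderings are dictated by the global edge numbering. The remaining vertices $V_5,\dots,V_8$ are simple points contributing only the usual branch-point identifications. To these I would adjoin the projective relation $\prod_{i=9}^{1}\gamma_{i'}\gamma_i = 1$, the parasitic commutators $[\gamma_i,\gamma_j] = [\gamma_{i'},\gamma_j] = [\gamma_i,\gamma_{j'}] = [\gamma_{i'},\gamma_{j'}] = 1$ for every pair of non-incident edges $i\ne j$ of the shadow, and finally the squared relations $\gamma_i^2 = \gamma_{i'}^2 = 1$ for $i=1,\dots,9$.

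Once the squared relations are imposed, iterated substitution using the branch-point identifications built into \Lref{Lemma3pt} and \Lref{Lemma4pt} should collapse the presentation to nine involutions $g_1,\dots,g_9$ satisfying commutation or braid relations according to whether the corresponding edges are non-incident or share a vertex, exhibiting $\widetilde{G}$ as a quotient of the group $C_Y(T)$ of \Dref{def:c_y_t}. Inspection of \Fref{X1} shows that the dual graph $T$ of $X^1$ is the union of two $4$-cycles (the local duals of the two squares centered at $V_1$ and $V_2$) joined by the single edge arising from the glued pair labeled $5$, so $T$ has $n=8$ vertices, $9$ edges, and $t := \operatorname{rank}\pi_1(T) = 2$. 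By the main theorem of~\cite{RTV} recalled in \ssref{Vishne}, $C_Y(T) \cong S_8\ltimes A_{2,8}$, and hence $\pi_1(\Galois{X^1})$ is \emph{a priori} a quotient of $A_{2,8}\cong F_{2,8}$.

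The main step, and the only place I expect any real work, is to extract from the Van Kampen relations enough additional identities to trivialize $A_{2,8}$ inside $\widetilde{G}$. The natural candidates are the cyclic relations around the two $4$-cycles of $T$, which in previous computations of this type (\cite{ZAPP,degree6}) are shown to follow from the combination of the projective relation with the parasitic commutators. Once these cyclic relations are established, a direct \Magma\ computation, as the authors indicate suffices for $X^1$, should confirm $|\widetilde{G}| = 8!$, at which point the exact sequence~\eqref{Gal} forces $\pi_1(\Galois{X^1}) = 1$.
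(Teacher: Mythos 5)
Your skeleton coincides with the paper's own argument: the same local input (\Lref{Lemma4pt} at $V_1,V_2$ with $(i,j,k,l)=(1,2,3,4)$ and $(6,7,8,9)$, \Lref{Lemma3pt} at $V_3,V_4$ with $(2,5,7)$ and $(4,5,6)$, branch-point identifications at $V_5,\dots,V_8$, parasitic commutators for the $18$ disjoint pairs, the projective relation), the same dual graph $T$ (two $4$-cycles joined by the bridge $5$, so $C_Y(T)\cong S_8\ltimes A_{2,8}$), and the same reduction of the theorem to showing that the image of $A_{2,8}$ dies in $G/\nsg{\gamma^2}$. The problem is that your proposal stops exactly where the proof has to happen, and at that point it points in the wrong direction.

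Two steps are asserted rather than derived. First, the collapse to nine involutions: the identities $\G_i=\G_{i'}$ for $i=2,4,5,6,7$ are not ``built into'' \Lref{Lemma3pt} and \Lref{Lemma4pt}; only $\G_1=\G_{1'},\G_3=\G_{3'},\G_8=\G_{8'},\G_9=\G_{9'}$ come from the $1$-points, and the rest must be extracted as the paper does, e.g.\ by comparing \eq{4pt-rels-9} with \eq{4pt-rels-10} and \eq{4pt-rels-11} to get $\G_2=\G_{2'}$ and $\G_4=\G_{4'}$, and then substituting into \eq{3pt-rels-3} to get $\G_5=\G_{5'}$. Second, and more seriously, the relations that trivialize $A_{2,8}$: you propose to hunt for the two cyclic relations as consequences of the projective relation combined with the parasitic commutators, by analogy with other papers, and then to let \Magma\ certify $|G/\nsg{\gamma^2}|=8!$. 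In fact no such detour is needed or justified here: each $4$-cycle of $T$ is the link of a single degree-$4$ vertex of the shadow, and the cyclic relations $\G_1\G_2\G_1=\G_3\G_4\G_3$ and $\G_6\G_7\G_6=\G_8\G_9\G_8$ are already present among the local relations of \Lref{Lemma4pt} once squares and the primed identifications are imposed (e.g.\ \eq{4pt-rels-9} for $(1,2,3,4)$ reduces to exactly such an identity), alongside the fork relations $[\G_4,\G_2\G_5\G_2]=e$ and $[\G_5,\G_6\G_7\G_6]=e$ that you also need in order to have a surjection from $C_Y(T)$ at all. The paper's proof of this theorem is precisely the derivation of these identities, after which $G/\nsg{\gamma^2}\to S_8$ is an isomorphism with no computer check (the text explicitly says traditional techniques suffice for $X^1$); leaving them to an analogy with \cite{ZAPP,degree6} and to an unperformed \Magma\ run leaves the essential step of the proof unestablished.
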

\begin{proof}
Let us describe group $G/\nsg{\gamma^2}$ for the shadow $X^1$ depicted in  Figure \ref{X1}. The group is generated by $\set{\G_i}$ for $i=1, 1', 2, 2', \dots, 9, 9'$. Vertices $V_1$ and $V_2$ give relations as in Lemma \ref{Lemma4pt}, where $(i,j,k,l)$ are equal to $(1,2,3,4)$ and $(6,7,8,9)$, respectively.
For the vertices $V_3$ and $V_4$ we have the relations from Lemma \ref{Lemma3pt}, with
$(i,j,k)$ equal to $(2,5,7)$ and $(4,5,6)$,   respectively.
The vertices $V_5$, $V_6$, $V_7$, and $V_8$ give the relations $\G_i=\G_{i'}$ for $i=1, 3, 8$, and $9$ respectively. Then we have the relations of type (5*):
$$[\G_{i,i'}, \G_{j,j'}]=e \ \ \mbox{for} \ \  (i,j) \in \Set{1,3}{5,6,7,8,9}\cup \Set{2}{6,8,9} \cup \Set{4}{7,8,9} \cup \Set{5}{8,9}.$$
The last relation is the projective one $\prodl_{i=9}^1  \G_{i'}\G_i = e$.

Next we compare (\ref{4pt-rels-9}) and (\ref{4pt-rels-10}) with $(i,j,k,l)$ equal to $(1,2,3,4)$, and together with $\G_1=\G_{1'}$ and $\G_3=\G_{3'}$, we get $\G_2=\G_{2'}$. Then we can get from (\ref{4pt-rels-9}) and (\ref{4pt-rels-11}) that $\G_4=\G_{4'}$. Similarly, we use $(i,j,k,l)$ equal to $(6,7,8,9)$, along with $\G_8=\G_{8'}$ and $\G_9=\G_{9'}$ to get $\G_7=\G_{7'}$ and $\G_6=\G_{6'}$ as well.

Now we turn to relation (\ref{3pt-rels-3}) and substitute $i=2$ and $j=5$ along with $\G_2=\G_{2'}$, which was obtained before, to get  $\G_5=\G_{5'}$. In a similar way, we substitute $i=4$ and $j=5$ and $\G_5=\G_{5'}$, and get $\G_4=\G_{4'}$.

It follows that group $G/\nsg{\gamma^2}$ is generated by $\set{\G_i}$, $1 \leq i \leq 9$.
We now present the dual graph of the edges in $X^1$, see Figure \ref{dualX1}, from which one can easily read the map $G/\nsg{\gamma^2} \rightarrow S_8$ (where $8$ is the number of vertices in the dual graph).

\begin{figure}[H]
$$\xymatrix@R=30pt@C=30pt{
{} & *={\bullet} \ar@{-}[rd]|2 & {} & {} & *={\bullet} \ar@{-}[rd]|9 & {} \\
*={\bullet} \ar@{-}[ru]|1 \ar@{-}[rd]|3 & {} & *={\bullet} \ar@{-}[r]|5 & *={\bullet} \ar@{-}[ru]|7 \ar@{-}[rd]|6 & {} & *={\bullet} \\
{} & *={\bullet} \ar@{-}[ru]|4 & {} & {} & *={\bullet} \ar@{-}[ru]|8 & {}
}$$
\caption{The dual graph $T_1$ related to $X^1$}\label{dualX1}
\end{figure}

\forget
\begin{figure}[H]
	\begin{center}
		
		\definecolor{circ_col}{rgb}{0,0,0}
		\begin{tikzpicture}[x=1cm,y=1cm,scale=2]
		
		\draw [fill=circ_col] (-1,0) node [below] {} circle (1pt);
		\draw [fill=circ_col] (-2,1) node [anchor=east] {} circle (1pt);
		\draw [fill=circ_col] (-1,2) node [above] {} circle (1pt);
		\draw [fill=circ_col] (0,1) node [anchor=west] {} circle (1pt);
		%\draw [fill=circ_col] (-1,1) node [anchor=north west] {$ V_1 $} circle (1pt);

                 \draw [fill=circ_col] (2,0) node [below] {} circle (1pt);
		\draw [fill=circ_col] (1,1) node [anchor=east] {} circle (1pt);
		\draw [fill=circ_col] (2,2) node [above] {} circle (1pt);
		\draw [fill=circ_col] (3,1) node [anchor=west] {} circle (1pt);
		%\draw [fill=circ_col] (2,1) node [anchor=north west] {$ V_2 $} circle (1pt);

		\draw [-, line width = 1pt] (-1,0) -- (-2,1);
		\draw [-, line width = 1pt] (-1,0) -- (0,1);
		%\draw [-, line width = 1pt] (-1,0) -- (-1,2);
		%\draw [-, line width = 1pt] (-2,1) -- (0,1);
		\draw [-, line width = 1pt] (-2,1) -- (-1,2);
		\draw [-, line width = 1pt] (-1,2) -- (0,1);
		
		\draw [-, line width = 1pt] (2,0) -- (1,1);
		\draw [-, line width = 1pt] (2,0) -- (3,1);
		%\draw [-, line width = 1pt] (2,0) -- (2,2);
		%\draw [-, line width = 1pt] (1,1) -- (3,1);
		\draw [-, line width = 1pt] (1,1) -- (2,2);
		\draw [-, line width = 1pt] (2,2) -- (3,1);
		
		\draw [-, line width = 1pt] (0,1) -- (1,1);
		
		\draw [-, line width = 1pt] (-1,0) -- node [below]{$ 4 $} (0,1);
		%\draw [-, line width = 1pt] (-1,0) -- node [anchor=east]{$ 3 $} (-1,1);
		\draw [-, line width = 1pt] (-1,0) -- node [below]{$ 3 $} (-2,1);
		\draw [-, line width = 1pt] (-2,1) -- node [above]{$ 1 $} (-1,2);
		\draw [-, line width = 1pt] (-1,2) -- node [above]{$ 2 $} (0,1);
		%\draw [-, line width = 1pt] (-1,2) -- node [anchor=east]{$ 2 $} (-1,1);
		%\draw [-, line width = 1pt] (-1,1) -- node [below]{$ 1 $} (-2,1);
		%\draw [-, line width = 1pt] (0,1) -- node [below]{$ 4 $} (-1,1);
		
		\draw [-, line width = 1pt] (2,0) -- node [below]{$ 8 $} (3,1);
		%\draw [-, line width = 1pt] (2,0) -- node [anchor=east]{$ 6 $} (2,1);
		\draw [-, line width = 1pt] (2,0) -- node [below]{$ 6 $} (1,1);
		\draw [-, line width = 1pt] (1,1) -- node [above]{$ 7 $} (2,2);
		\draw [-, line width = 1pt] (2,2) -- node [above]{$ 9 $} (3,1);
		%\draw [-, line width = 1pt] (2,2) -- node [anchor=east]{$ 7 $} (2,1);
		%\draw [-, line width = 1pt] (2,1) -- node [below]{$ 6 $} (1,1);
		%\draw [-, line width = 1pt] (2,1) -- node [below]{$ 9 $} (3,1);
		
		\draw [-, line width = 1pt] (0,1) -- node [below]{$ 5 $} (1,1);

		\end{tikzpicture}
		
	\end{center}
	\setlength{\abovecaptionskip}{-0.15cm}
	\caption{The dual graph related to $X^1$}\label{dualX1}
\end{figure}
\forgotten
The relations that we have in $G/\nsg{\gamma^2}$, from the above simplification, are of type
\begin{center}
\begin{eqnarray}
  [\G_i,\G_j]=e, \ \  \mbox{if i,j are disjoint lines;}\label{com} \\
  \langle \G_i,\G_j \rangle=e, \ \  \mbox{otherwise.}\nonumber
\end{eqnarray}
\end{center}
We also have two relations $\G_1\G_2 \G_1=\G_3 \G_4\G_3$ and $\G_6\G_7 \G_6=\G_8 \G_9\G_8$ that relate to the two cycles in the dual graph. Moreover, relations $[\G_4,\G_2 \G_5\G_2]=e$ and $[\G_5,\G_6 \G_7\G_6]=e$ appear in our presentation, and they are associated to the two sets of three lines each meeting at a point in the graph. This means that $G/\nsg{\gamma^2} \rightarrow S_8$ is an isomorphism, and since $\pi_1(\Galois{X^1})$ is the kernel of this map, the fundamental group of the Galois cover of $X^1$ is trivial.
\end{proof}

\subsection{The surface $X^2$}

\begin{thm}\label{PropX2}
The fundamental group of the Galois cover related to $X^2$ is trivial.
\end{thm}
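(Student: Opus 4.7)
My plan is to follow the same strategy as the proof of Theorem \ref{PropX1}, though several steps will require more work since $X^2$ has only two $1$-vertices (compared to four in $X^1$) and one additional $4$-cycle in its dual graph. The shadow has $10$ non-redundant edges, so $G/\nsg{\gamma^2}$ is generated by $\G_i, \G_{i'}$ for $i=1,\dots,10$. Local relations arise from Lemma \ref{Lemma4pt} applied at the three $4$-vertices $V_1, V_2, V_3$ with $(i,j,k,l)$ equal to $(1,2,3,4)$, $(4,5,6,7)$, $(7,8,9,10)$, respectively; from Lemma \ref{Lemma3pt} applied at the two $3$-vertices $V_4, V_5$ with $(i,j,k)$ equal to $(2,5,8)$ and $(3,6,9)$; and from the $1$-vertices $V_6, V_7$, supplying $\G_1 = \G_{1'}$ and $\G_{10} = \G_{10'}$. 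To these one adds the parasitic commutation relations of type $(5^*)$ for every pair of disjoint edges, and the projective relation.

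The first phase is to propagate the identifications $\G_i = \G_{i'}$ to all indices. Starting from $\G_1 = \G_{1'}$ alone, a direct comparison among the $V_1$-relations \eq{4pt-rels-9}--\eq{4pt-rels-12} will not immediately collapse all of $\G_2,\G_3,\G_4$ with their primed versions, because the $X^1$ argument required two simultaneous $1$-vertex inputs at each $4$-vertex. The idea is to iterate: extract as many partial equalities as possible at $V_1$, feed them through the cusp and identification relations \eq{3pt-rels-3}--\eq{3pt-rels-5} of Lemma \ref{Lemma3pt} at $V_4$ to produce equalities among $\G_5,\G_{5'},\G_8,\G_{8'}$, then move to $V_2$, and so on. A symmetric sweep starts from $V_7$ with $\G_{10}=\G_{10'}$, and the two sweeps meet and close the loop at $V_5$. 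This is precisely the kind of bookkeeping best handled by \Magma{}, aided by the identifications from \cite{RTV}.

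Once every pair $\G_i,\G_{i'}$ has been merged, the resulting group is generated by ten involutions indexed by the edges of the dual graph $T_2$, which has $8$ vertices and, since $10-8+1=3$, exactly three independent cycles (each a $4$-cycle centered at one of $V_1,V_2,V_3$). The parasitic relations yield $[\G_i,\G_j]=e$ whenever $i,j$ are disjoint in $T_2$, while the local lemmas give $\langle\G_i,\G_j\rangle=e$ when $i,j$ share a shadow vertex; these are exactly the defining relations of $C(T_2)$. The natural map $G/\nsg{\gamma^2}\to S_8$ is then the one from Subsection \ref{ss:CYT}, sending each $\G_i$ to the transposition of the two triangles meeting along edge $i$, and $\pi_1(\Galois{X^2})$ is its kernel.

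The main obstacle is to verify triviality of this kernel. By the results recalled in Subsection \ref{Vishne}, the group $C_Y(T_2)$ surjects onto $S_8$ with nontrivial kernel $F_{3,8}$, so I must show that the presentation in hand already contains enough relations to kill this kernel. The strategy is to read off, as in the $X^1$ proof, the three cyclic relations of the form $\G_i\G_j\G_i=\G_k\G_l\G_k$ directly from \eq{4pt-rels-9}--\eq{4pt-rels-12} (one per $4$-vertex) after substitution of the pair equalities, together with plate and knife relations of type \eq{CYT1}--\eq{CYT4} obtained from suitable combinations of the $3$-point and $4$-point relations. Since the cyclic relations collapse the $F_{3,8}$ summand and the other relations realize the full defining set of $C_Y(T_2)$, the map $G/\nsg{\gamma^2}\to S_8$ becomes an isomorphism and $\pi_1(\Galois{X^2})$ is trivial.
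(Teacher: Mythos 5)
Your overall strategy (present the group, pass to a quotient of $C_Y(T)$ for the dual graph, and use the structure $S_n\ltimes A_{t,n}$ from \cite{RTV} to kill the kernel of the map to $S_8$) is the same general framework as the paper, but the route you describe has a genuine gap at its central step. Your whole plan hinges on a ``first phase'' in which every identification $\G_i=\G_{i'}$ is derived, so that you can then work with ten unprimed involutions, the full dual graph $T_2$ with three independent cycles, and cyclic relations read off from \eq{4pt-rels-9}--\eq{4pt-rels-12}. But you never actually derive these identifications: you only sketch an ``iterative sweep'' and delegate the bookkeeping to \Magma. This is precisely where $X^2$ differs from $X^1$: with only the two $1$-vertices $V_6,V_7$ feeding in $\G_1=\G_{1'}$ and $\G_{10}=\G_{10'}$, the comparison argument that worked at the $4$-points of $X^1$ does not propagate, and in the paper's own computation the simplified presentations do \emph{not} collapse the primed generators (both $\G_5$ and $\G_{5'}$ survive the first run, and $\G_{5'}$ survives the second). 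The equalities $\G_i=\G_{i'}$ do hold a posteriori, because the group turns out to be $S_8$, but assuming them as an initial reduction is essentially assuming the hard part of the theorem.

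The same over-optimism affects your endgame. Even in the paper's smaller, $8$-generator presentation, several expected relations of type \eq{com} and, crucially, the cyclic relation for the single cycle of the reduced dual graph are \emph{missing} from the simplified output; they cannot be ``read off directly.'' The paper instead maps $C_Y(T)\cong S_8\ltimes A_{1,8}$ onto $G/\nsg{\gamma^2}$, pulls the defining relations back, and finds only the relation $u_{h,f}u_{g,d}=e$, from which triviality of $A_{1,8}$ follows by acting with $S_8$ and the identity $u_{a,b}^2=u_{c,d}u_{d,e}=u_{c,e}=u_{a,b}$. So the step you describe as ``read off the three cyclic relations of the form $\G_i\G_j\G_i=\G_k\G_l\G_k$, one per $4$-vertex'' presupposes both the unproven merging and the presence of relations that, in the actual computation, only appear in a disguised form and require a separate group-theoretic argument. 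To make your proposal into a proof you would need either an explicit derivation of all $\G_i=\G_{i'}$ (together with a verification of the full list of defining relations \eq{CYT1}--\eq{CYT4} of $C_Y(T_2)$), or you should follow the paper's route of working with the mixed primed/unprimed generating set produced by the simplification and killing the $A_{1,8}$-kernel by a pullback argument.
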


\begin{proof}
The group $G/\nsg{\gamma^2}$ is generated by $\set{\G_i,\G_{i'}}$ for $i=1, \dots, 10$. Following  Figure \ref{X2}, vertices $V_1$, $V_2$ and $V_3$ give the relations from Lemma \ref{Lemma4pt}, where $(i,j,k,l)$ equal to $(1,2,3,4)$ for $V_1$, to $(4,5,6,7)$ for $V_2$, and to $(7,8,9,10)$ for $V_3$.
Vertices $V_4$ and $V_5$ give the relations from Lemma \ref{Lemma3pt}, with $(i,j,k)$ equal to $(2,5,8)$ and $(3,6,9)$, respectively.
The vertices $V_6$ and $V_7$ give the relations $\G_i=\G_{i'}$ for $i=1$ and $i=10$, respectively.
The relations of type (5*) are:
$$[\G_{i,i'}, \G_{j,j'}]=e \ \ \mbox{for} \ \  (i,j) \in \begin{array}{l} \Set{1}{5,6,7,8,9,10}\cup \Set{2}{6,7,9,10}\cup\Set{3}{5,7,8,10} \\
\cup \Set{4}{8,9,10}\cup\Set{5}{9,10}\cup\Set{6}{8,10}.
\end{array}$$
And as always, the projective relation is  $\prodl_{i=10}^1  \G_{i'}\G_i = e$.

Now we explain the simplification of the presentation of $G/\nsg{\gamma^2}$.
As written above, we have 20 generators and a quite a few relations. We try a run of the presentation in \Magma\ and get as an output a simplified presentation of $G/\nsg{\gamma^2}$ with eight generators $\G_1, \G_{3'}, \G_5, \G_{5'}, \G_{6'}, \G_7, \G_9, \G_{10}$, still with a long list of relations.

We construct the dual graph according to the list of generators, see Figure \ref{twoedges-firstdual}.

\begin{figure}[H]
$$\xymatrix@R=30pt@C=30pt{
{} & {} & {} & {} & {} &*={\bullet} \ar@{-}[r]|5  &*={\bullet} \ar@{-}[l]<4pt>^{5'} \\
{} &*={\bullet} \ar@{-}[r]|1 &*={\bullet} \ar@{-}[r]|{3'} &*={\bullet}  \ar@{-}[r]|{6'} &*={\bullet}  \ar@{-}[ru]|7 \ar@{-}[rd]|9  \\
{} & {} & {} & {} & {} &*={\bullet} \ar@{-}[r]|{10} &*={\bullet}
}$$
\caption{The dual graph after the first run in \Magma}\label{twoedges-firstdual}
\end{figure}

\forget
\begin{figure}[H]
	\begin{center}
		
		\definecolor{circ_col}{rgb}{0,0,0}
		\begin{tikzpicture}[x=1cm,y=1cm,scale=2]
		
		\draw [fill=circ_col] (0,0) node [below] {} circle (1pt);
		\draw [fill=circ_col] (1,0) node [below] {} circle (1pt);
		\draw [fill=circ_col] (2,0) node [below] {} circle (1pt);
		\draw [fill=circ_col] (3,0) node [below] {} circle (1pt);
		\draw [fill=circ_col] (4,1) node [below] {} circle (1pt);
		\draw [fill=circ_col] (5,1) node [below] {} circle (1pt);
		\draw [fill=circ_col] (4,-1) node [below] {} circle (1pt);
		\draw [fill=circ_col] (5,-1) node [below] {} circle (1pt);
		
		\draw [-, line width = 1pt] (0,0) -- (3,0);
		\draw [-, line width = 1pt] (3,0) -- (4,1);
		\draw [-, line width = 1pt] (4,1) -- (5,1);
		\draw [-, line width = 1pt] (4.1,0.9) -- (4.9,0.9);
		\draw [-, line width = 1pt] (3,0) -- (4,-1);
		\draw [-, line width = 1pt] (4,-1) -- (5,-1);
		
		\draw [-, line width = 1pt] (0,0) -- node [below]{$ 1 $} (1,0);
		\draw [-, line width = 1pt] (1,0) -- node [below]{$ 3' $} (2,0);
		\draw [-, line width = 1pt] (2,0) -- node [below]{$ 6' $} (3,0);
		\draw [-, line width = 1pt] (3,0) -- node [above]{$ 7 $} (4,1);
		\draw [-, line width = 1pt] (4,1) -- node [above]{$ 5 $} (5,1);
		\draw [-, line width = 1pt] (4.1,0.9) -- node [below]{$ 5' $} (4.9,0.9);
		\draw [-, line width = 1pt] (3,0) -- node [below]{$ 9 $} (4,-1);
		\draw [-, line width = 1pt] (4,-1) -- node [below]{$ 10 $} (5,-1);
		
		\end{tikzpicture}
		
	\end{center}
	\setlength{\abovecaptionskip}{-0.15cm}
	\caption{The dual graph after the first run in \Magma}\label{twoedges-firstdual}
\end{figure}
\forgotten
We see in the output that there is a missing relation $[\G_7,\G_{10}]=e$, which can be derived from another relation $[\G_1 \G_7 \G_{6'} \G_{3'} \G_{6'} \G_1 \G_7, \G_{10}]=e$ in the output. We then add it to the presentation, and run \Magma\ for the second time.

The output of the second run on \Magma\  consists of $\G_1, \G_2, \G_{3'}, \G_{5'}, \G_{6'}, \G_7, \G_9, \G_{10}$, as generators, again with  a long list of relations. We pick up all relations of type (\ref{com}) from the simplified presentation, and construct the dual graph related to these generators, see Figure \ref{twoedges-seconddual}.

\begin{figure}[H]
$$\xymatrix@R=30pt@C=30pt{
{} e\ar@{-}[d]^1 \ar@{-}[r]^2  &f \ar@{-}[r]^{5'}  &a \ar@{-}[d]^7 & \\
{} d \ar@{-}[r]^{3'}  &c  \ar@{-}[r]^{6'} &b  \ar@{-}[r]^{9} &g  \ar@{-}[r]^{10} &h
}$$
\caption{The dual graph after the second run in \Magma}\label{twoedges-seconddual}
\end{figure}

\forget
\begin{figure}[H]
	\begin{center}
	\definecolor{circ_col}{rgb}{0,0,0}
\begin{tikzpicture}[x=1cm,y=1cm,scale=2]
\draw [fill=circ_col] (0,0) node [below] {d} circle (1pt);
\draw [fill=circ_col] (1,0) node [below] {c} circle (1pt);
\draw [fill=circ_col] (0,1) node [above] {e} circle (1pt);
\draw [fill=circ_col] (1,1) node [above] {f} circle (1pt);
\draw [fill=circ_col] (2,1) node [above] {a} circle (1pt);
\draw [fill=circ_col] (2,0) node [below] {b} circle (1pt);
\draw [fill=circ_col] (3,0) node [below] {g} circle (1pt);
\draw [fill=circ_col] (4,0) node [below] {h} circle (1pt);
\draw [-, line width = 1pt] (0,0) -- (0,1);
		\draw [-, line width = 1pt] (0,0) -- (1,0);
		\draw [-, line width = 1pt] (0,1) -- (1,1);
		\draw [-, line width = 1pt] (1,1) -- (2,1);
		\draw [-, line width = 1pt] (1,0) -- (2,0);
		\draw [-, line width = 1pt] (2,1) -- (2,0);
		\draw [-, line width = 1pt] (2,0) -- (3,0);
		\draw [-, line width = 1pt] (3,0) -- (4,0);
         \draw [-, line width = 1pt] (0,0) -- node [anchor=east]{$ 1 $} (0,1);
\draw [-, line width = 1pt] (0,0) -- node [below]{$ 3' $} (1,0);
\draw [-, line width = 1pt] (0,1) -- node [above]{$ 2 $} (1,1);
\draw [-, line width = 1pt] (1,1) -- node [above]{$ 5' $} (2,1);
                 \draw [-, line width = 1pt] (1,0) -- node [below]{$ 6' $} (2,0);
\draw [-, line width = 1pt] (2,0) -- node [anchor=west]{$ 7 $} (2,1);
\draw [-, line width = 1pt] (2,0) -- node [below]{$ 9 $} (3,0);
\draw [-, line width = 1pt] (3,0) -- node [below]{$ 10 $} (4,0);
\end{tikzpicture}
		
	\end{center}
	\setlength{\abovecaptionskip}{-0.15cm}
	\caption{The dual graph after the second run in \Magma}\label{twoedges-seconddual}
\end{figure}
\forgotten
In the output we could not find the relation $[\G_{5'},\G_{6'}]=e$, but the relation $[\G_{6'}\G_{3'} \G_{6'},\G_{5'}]=e$ appearing in the output simplifies to $[\G_{5'},\G_{6'}]=e$.
At this point, a cyclic relation for the cycle appearing in the graph, is still missing.

For $T$, the graph depicted in Figure~\ref{twoedges-seconddual}, we get a well-defined surjection $ \varphi:C_Y(T)\to G/\nsg{\G^2} $, see Definition \ref{def:c_y_t}.
This map sends the generator corresponding to an edge $ E $ to the generator $ \G_i $, where $ i $ is the label on edge $ E $ in Figure~\ref{twoedges-seconddual}.
It is well-defined by the definition of $ C_Y(T) $.
By~\cite{RTV}, the map $ \psi:C_Y(T)\to S_8 \ltimes A_{1,8}$, defined by $$\G_1 \mapsto (e\ d), \ \ \G_2 \mapsto (e\ f), \ \ \G_{3'} \mapsto (c\ d), \ \ \G_{5'} \mapsto (a\ f)u_{a,f}, \ \ \G_{6'} \mapsto (b\ c),$$
 and
 $$ \G_7 \mapsto (a\ b), \ \ \G_9 \mapsto (b\ g), \ \ \G_{10} \mapsto (g\ h)$$
	is an isomorphism.
	Here, $S_8$ acts on the label set $L = \set{a,b,c,d,e,f,g,h}$, and $A_{1,8}$ is generated by the elements of the form $u_{x,y}$ (see Subsection \ref{Vishne}), for $x,y \in L$.
	So, we are left to compute $ \ker \varphi\circ \psi^{-1} $.
	This is done by pulling back the defining relations of $ G $ to $ S_8 \ltimes A_{1,8} $, which gives us a unique non-vanishing relation: $u_{h,f} u_{g,d} = e$.
However, we can act with $S_8$ on this relation, and obtain $u_{x,y} = u_{x',y'}^{-1}$ for any distinct $x,y,x',y' \in L$. In particular, $u_{a,b}^2 = u_{c,d}u_{d,e} = u_{c,e} = u_{a,b}$, so $u_{a,b} = 1$ and all the generators of $A_{1,8}$ are contained in $\ker \varphi\circ\psi^{-1}$.

We thus proved that  $G/\nsg{\gamma^2}\cong S_8$. In particular, the fundamental group of $\Galois{X^2}$ is trivial.
\end{proof}

\subsection{The surface $X^3$}

\begin{thm}\label{PropX3}
The fundamental group of the Galois cover related to $X^3$ is trivial.
\end{thm}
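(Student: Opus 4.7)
The plan is to mimic the strategy used for $X^2$, but with the multi-layered iteration described in \Sref{outline}, since the additional gluing in $X^3$ produces more cycles in the dual graph and hence a richer kernel to trivialize. First I would write down the generators $\G_i,\G_{i'}$ for $i=1,\dots,11$ coming from the $11$ non-redundant edges of \Fref{X3}, and read off the local relations: vertices $V_1,V_2,V_3,V_4$ are $4$-points contributing the relations of \Lref{Lemma4pt} with the tuples $(1,2,3,4)$, $(3,6,10,8)$, $(4,5,7,6)$, and $(7,9,11,10)$ respectively (after fixing a global order of edges consistent with \Fref{X3}); vertices $V_5,V_6$ are $3$-points contributing the relations of \Lref{Lemma3pt} with $(2,5,9)$ and $(1,8,11)$ respectively. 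To these I add all parasitic (type (5*)) relations $[\G_{i,i'},\G_{j,j'}]=e$ for disjoint edges $i,j$, together with the projective relation $\prod_{i=11}^{1}\G_{i'}\G_i=e$.

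Next I would feed this presentation into \Magma\ to obtain a first simplification, which by analogy with the $X^2$ computation should collapse most pairs $\G_i=\G_{i'}$ except at the edges lying on nontrivial cycles of the dual graph $T_3$, and leave a manageable list of generators supported on $T_3$. The dual graph $T_3$ of $X^3$ has $8$ vertices (one per triangle) and the cycle rank $t$ equal to the number of identified edge-pairs that produce independent loops, namely $t=3$ coming from the three gluings $V_5V_6$ (edge $5$), $V_3V_2$ (edge $6$), and the two copies of the $V_5V_6$--edges fused in the passage from $X^2$ to $X^3$. After the first \Magma\ run, a few commutation or triple relations on the surviving generators will not yet be in the output but will follow from longer conjugated commutators already present; I would extract these, insert them as new hypotheses, and re-run \Magma. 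This is exactly the multi-layered loop described in the introduction, and I would iterate it until the presentation stabilizes to one where every defining relation of $C_Y(T_3)$ (\Dref{def:c_y_t}) either appears literally or is easily derivable.

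At that point, I obtain a well-defined surjection $\varphi:C_Y(T_3)\twoheadrightarrow G/\nsg{\gamma^2}$. Using the result of \cite{RTV} recalled in \ssref{Vishne}, I identify $C_Y(T_3)\cong S_8\ltimes A_{t,8}$ with $t=3$ (since $\pi_1(T_3)$ has rank $3$), via an explicit map $\psi$ sending each surviving generator $\G_i$ to a transposition on the label set of triangles, with one factor $u_{x,y}$, $u'_{x,y}$, or $u''_{x,y}$ attached to each of the three ``cycle-generators''. The task then reduces to pulling back the defining relations of $G/\nsg{\gamma^2}$ (in particular the cuspidal triple relations coming from the $4$-points and the extra relation obtained from the projective product) to $S_8\ltimes A_{3,8}$, obtaining a short list of relations among the $u_{x,y},u'_{x,y},u''_{x,y}$. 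As in the $X^2$ case, conjugating these relations by $S_8$ and using the defining identities of $A_{3,8}$, I expect each generator $u_{x,y}$ (and similarly for $u'$, $u''$) to satisfy $u_{x,y}^2=u_{x,y}$, forcing the whole kernel $A_{3,8}$ into $\ker\varphi\circ\psi^{-1}$. This would give $G/\nsg{\gamma^2}\cong S_8$ and hence $\pi_1(\Galois{X^3})=1$ from the short exact sequence \eqref{Gal}.

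The main obstacle I expect is the last step of the \Magma\ iteration: after the first simplification some commutators between distant generators (for instance between the $\G_i$ labeling the two $V_3$--$V_2$ edges $6$ and $6'$, or along the new cycle created by the third gluing) will not be visible as explicit relations but only hidden inside long conjugated words. Finding a short subgraph $T'\subset T_3$ for which the relevant subgroup is already a quotient of $C_Y(T')$, using this to liberate the missing commutators, and then propagating them back into the global presentation, is exactly the delicate bookkeeping advertised in the introduction. Once those relations are in hand, the rest is a direct, if somewhat tedious, $S_8$-orbit computation in $A_{3,8}$.
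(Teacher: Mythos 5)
Your overall outline follows the paper's strategy (Van Kampen presentation from \Lref{Lemma4pt} and \Lref{Lemma3pt}, \Magma\ simplification, a surjection from a $C_Y$-type group identified with $S_n\ltimes A_{t,n}$ via \cite{RTV}, and killing the kernel by conjugating pulled-back relations with $S_8$), but there is a genuine gap exactly at the step the paper considers the hard one. You assume that the multi-layered iteration will eventually make \emph{every} defining relation of $C_Y(T_3)$ derivable, so that you get a surjection $C_Y(T_3)\twoheadrightarrow G/\nsg{\gamma^2}$ and can work in $S_8\ltimes A_{3,8}$. In the actual computation this is precisely what fails: after the \Magma\ runs two triple relations, $\langle\G_5,\G_9\rangle=e$ and $\langle\G_{6'},\G_{10}\rangle=e$, are not obtainable from the output, and there is no further iteration that produces them directly. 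The paper's resolution is not ``more of the same'': it builds maps from $C_Y(T_0)$ and $C_Y(T_0')$ for two one-cycle subgraphs (each isomorphic to $S_6\ltimes A_{1,6}$), amalgamates them into $S_6\ltimes(A_{1,6}*A_{1,6})$, takes the free product with the still-missing generators $\G_{6'},\G_9$ to get a surjection onto $G/\nsg{\gamma^2}$, and only then pulls back output relations through this partial structure to derive $\langle\G_{6'},\G_{10}\rangle=e$, enlarge the subgraph, prove $u_{x,y}=e$, and conclude that $\G_5$ is redundant. The endgame therefore takes place in $C_Y(\hat T)\cong S_8\ltimes A_{1,8}$ for the graph $\hat T$ with edge $5$ removed, not in a cover of the full dual graph. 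Your last paragraph gestures at ``finding a short subgraph $T'$'' but gives no mechanism for how a quotient of $C_Y(T')$ on a proper subset of the generators yields the missing global relations; that mechanism (partial covers, amalgamation, free product with leftover generators) is the content of the proof, not bookkeeping.

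There are also concrete inaccuracies that would derail the computation as written. The cycle rank you assign to the dual graph is wrong: the graph on the surviving generators has $8$ vertices and $9$ edges, hence rank $2$ (one cycle from the parallel pair $5,5'$ and one hexagonal cycle), and the groups that actually occur are $A_{1,6}$, $A_{1,7}$, $A_{1,8}$ and the free product $A_{1,6}*A_{1,6}$ --- never $A_{3,8}$; so the target $S_8\ltimes A_{3,8}$ with three families $u,u',u''$ does not match any stage of the argument. The $4$-point tuples must also respect the global edge ordering required by \Lref{Lemma4pt} (the paper uses $(3,6,8,10)$, $(4,5,6,7)$, $(7,9,10,11)$, not your unordered versions). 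Finally, the expectation that the kernel dies via an identity of the shape $u_{x,y}^2=u_{x,y}$ is an artifact of the $X^2$ case; for $X^3$ the triviality of the $u$'s and $u'$'s comes from different pulled-back words (e.g.\ $u_{f,d}'^2=e$ together with $u_{c,g}'u_{c,d}'u_{f,d}'u_{e,a}'u_{h,b}'=e$ and $S_8$-conjugation), so this part, while in the right spirit, cannot be cited as an analogy but must be recomputed.
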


\begin{proof}
Group $G/\nsg{\gamma^2}$ related to $X^3$ is generated by $\set{\G_i}$ for $i=1, 1', 2, 2', \dots, 11, 11'$. Vertices $V_1$, $V_2$, $V_3$, and $V_4$ give the relations from Lemma \ref{Lemma4pt}, where $(i,j,k,l)$ are equal to $(1,2,3,4)$ for $V_1$, $(3,6,8,10)$ for $V_2$,  $(4,5,6,7)$ for $V_3$, and $(7,9,10,11)$ for $V_4$.
Vertices $V_5$ and $V_6$ give the relations from Lemma \ref{Lemma3pt}, with $(i,j,k)$ equals $(2,5,9)$ and $(1,8,11)$, respectively.
The relations of type (5*) are:
$$[\G_{i,i'}, \G_{j,j'}]=e$$ for $$(i,j) \in \begin{array}{l}
\Set{1}{5,6,7,9,10} \cup  \Set{2}{6,7,8,10,11}\cup\Set{3}{5,7,9,11}\cup \Set{4}{9} \\
\cup \Set{4,5}{8,10,11} \cup  \Set{6}{9,11} \cup \Set{8}{7,9}, \end{array}
$$
and the projective relation is  $\prodl_{i=11}^1  \G_{i'}\G_i = e$.

Group $G/\nsg{\gamma^2}$ now has $22$ generators and a long list of relations. This input was fed into \Magma, and the output is now the group with the generators $\G_{1'}, \G_{2'}, \G_5, \G_{5'}, \G_{6'}, \G_7, \G_{8'}, \G_9$, and $\G_{10}$, and a long list of relations. Let us label the triangles of $X^3$, as in \Fref{X3++}.

\begin{figure}[H]
$$\xymatrix@R=30pt@C=30pt{
{}  & V_5 \ar@{-}[dl]|{} \ar@{-}[d]|2 \ar@{-}[dr]|5 & {} & {} & V_5 \ar@{-}[dl]|5 \ar@{-}[d]|{9} \ar@{-}[dr]|{} & {} \\
V_6 \ar@{-}[r]|1 \ar@{-}[dr]|8 & V_1
\ar@{}[ru]|(0.3){\textcircled{a}}
\ar@{}[lu]|(0.3){\textcircled{b}}
\ar@{}[ld]|(0.3){\textcircled{c}}
\ar@{}[rd]|(0.3){\textcircled{h}}
\ar@{-}[r]|4 \ar@{-}[d]|3 & V_3 \ar@{-}[dl]|6 & V_3 \ar@{-}[r]|7 \ar@{-}[dr]|6 & V_4
\ar@{}[ru]|(0.3){\textcircled{g}}
\ar@{}[lu]|(0.3){\textcircled{f}}
\ar@{}[ld]|(0.3){\textcircled{e}}
\ar@{}[rd]|(0.3){\textcircled{d}}
\ar@{-}[r]|{11} \ar@{-}[d]|{10} & V_6 \ar@{-}[dl]|8 \\
{}  & V_2 & {} & {} & V_2 & {}
}$$
	\caption{The shadow $X^3$ with triangles labelled}\label{X3++}
\end{figure}

The dual graph $T$ with these generators is now given in \Fref{threeedges-firstdual}.
\begin{figure}[H]
\[
\xymatrix@R=30pt@C=30pt{
{}  &b \ar@{-}[r]^{2'}  &a \ar@/^1ex/@{-}[r]^5 \ar@/_1ex/@{-}[r]<-4pt>_{5'} &f \ar@{-}[d]^7  \ar@{-}[r]^9 &g & \\
{}  &c\ar@{-}[r]^{8'} \ar@{-}[u]^{1'}  &d \ar@{-}[r]^{10} &e \ar@{-}[r]^{6'} &h
}
\]
\caption{The dual graph for the remaining generators}\label{threeedges-firstdual}
\end{figure}
\forget
\begin{figure}[H]
	\begin{center}
		
		\definecolor{circ_col}{rgb}{0,0,0}
		\begin{tikzpicture}[x=1cm,y=1cm,scale=2]
		
		\draw [fill=circ_col] (0,0) node [below] {d} circle (1pt);
		\draw [fill=circ_col] (0,1) node [above] {c} circle (1pt);
		\draw [fill=circ_col] (1,1) node [above] {b} circle (1pt);
		\draw [fill=circ_col] (2,1) node [above] {a} circle (1pt);
		\draw [fill=circ_col] (3,0) node [below] {e} circle (1pt);
		\draw [fill=circ_col] (4,0) node [below] {h} circle (1pt);
		\draw [fill=circ_col] (3,1) node [above] {f} circle (1pt);
		\draw [fill=circ_col] (4,1) node [above] {g} circle (1pt);
		
		\draw [-, line width = 1pt] (0,0) -- (3,0);
		\draw [-, line width = 1pt] (0,1) -- (1,1);
		\draw [-, line width = 1pt] (1,1) -- (2,1);
		\draw [-, line width = 1pt] (2,1) -- (3,1);
		\draw [-, line width = 1pt] (2.1,0.9) -- (2.9,0.9);
		\draw [-, line width = 1pt] (3,1) -- (4,1);
		\draw [-, line width = 1pt] (2,0) -- (3,0);
		\draw [-, line width = 1pt] (3,0) -- (4,0);

                 \draw [-, line width = 1pt] (0,0) -- node [anchor=east]{$ 8' $} (0,1);
                 \draw [-, line width = 1pt] (0,0) -- node [below]{$ 10 $} (3,0);
                 \draw [-, line width = 1pt] (0,1) -- node [above]{$ 1' $} (1,1);
                 \draw [-, line width = 1pt] (1,1) -- node [above]{$ 2' $} (2,1);
                 \draw [-, line width = 1pt] (2,1) -- node [above]{$ 5 $} (3,1);
                 \draw [-, line width = 1pt] (3,1) -- node [anchor=west]{$ 7 $} (3,0);
                 \draw [-, line width = 1pt] (3,1) -- node [above]{$ 9 $} (4,1);
                 \draw [-, line width = 1pt] (3,0) -- node [below]{$ 6' $} (4,0);
                 \draw [-, line width = 1pt] (2.1,0.9) -- node [below]{$ 5' $} (2.9,0.9);

		\end{tikzpicture}
		
	\end{center}
	\setlength{\abovecaptionskip}{-0.15cm}
	\caption{The dual curve after  running \Magma}\label{threeedges-firstdual}
\end{figure}
\forgotten
The list of simplified relations we got from \Magma\ contains all the defining relations of $C_Y(T)$ of a type \eqref{com} corresponding to $T$ except $\langle \G_5, \G_{7}\rangle=e, \langle \G_5, \G_{9}\rangle=e, \langle \G_{5'}, \G_{7}\rangle=e, \langle \G_{6'}, \G_{10}\rangle=e$ and $\langle \G_7, \G_{10}\rangle=e$.
It also contains the relations $\langle\G_{6'}\G_{7}\G_{6'},\G_{5'}\rangle=e$, $\langle\G_{6'}\G_{5'}\G_{8'}\G_{6'},\G_{7}\G_{10}\G_{8'}\G_7\rangle=e$, and $\langle\G_{6'}\G_{8'}\G_{7}\G_{6'}\G_{8'},\G_{1'}\G_{5}\G_{2'}\G_{5}\G_{1'}\rangle=e$, which can be simplified to $\langle\G_{5'},\G_{7}\rangle=e$, $\langle\G_{7},\G_{10}\rangle=e$, and $\langle \G_5, \G_{7}\rangle=e$ respectively.
We thus obtained the relations of type \eqref{com} corresponding to $T$, except $\langle \G_5, \G_{9}\rangle=e$ and $\langle \G_{6'}, \G_{10}\rangle=e$.

Denote by $T_0'$ the subgraph of $T$ resulting by removing the edges labeled $6'$, $9$, and $5'$, and by~$T_0$ the subgraph of $T$ resulting by removing the edges labeled $6'$, $9$, and $5$.
Since we have derived the relations corresponding to $T_0$ and $T_0'$, we conclude that there are natural maps $C_Y(T_0)\to G/\nsg{\gamma^2}$ and $C_Y(T_0')\to G/\nsg{\gamma^2}$.
Those maps agree on the generators that appear in both groups, i.e., $ \gamma_{1'},\gamma_{2'},\gamma_7,\gamma_{8'},\gamma_{10}$, and so we get a map from their amalgamated product. Each of the graphs $T_0$ and $T_0'$ has one cycle, so by \cite{RTV}, $C_Y(T_0)$ and $C_Y(T_0')$ are both isomorphic to $S_6 \ltimes A_{1,6}$ by
$$\G_{1'} \mapsto (b\ c), \G_{2'} \mapsto (a\ b), \G_{5} \mapsto (a\ f)u_{a,f}, \G_{5'} \mapsto (a\ f)u_{a,f}', \G_7 \mapsto (e\ f), \G_{8'} \mapsto (c\ d), \G_{10} \mapsto (d\ e).$$
Here, $S_6$ acts on the label set $L = \set{a,b,c,d,e,f}$, the copy of $A_{1,6}$ corresponding to $C_Y(T_0)$ is generated by the elements of the form $u_{x,y}$ for $x,y \in L$, and the copy of $A_{1,6}$ corresponding to $C_Y(T_0')$ is generated by elements of the form $u'_{x,y}$ for $x,y\in L$  (see Subsection \ref{Vishne}).
Thus, their amalgamated product is isomorphic to $S_6 \ltimes (A_{1,6} * A_{1,6})$, which we denote by $H$.
The resulting map to $G/\nsg{\G^2}$ is of course not surjective, so we take the free product with the missing generators to get the map
$$\varphi_1 : H * \langle \G_{6'}, \G_9 \rangle \twoheadrightarrow G/\nsg{\G^2},$$
of which we need to find the kernel after adding the remaining relations.

One of the relations in the output pulls back by $\varphi_1$ to
\begin{eqnarray*}
(a\ b\ c)\G_{6'}(a\ b)(c\ d\ e)\G_{6'}(a\ b\ c)(d\ e)\G_{6'}(a\ b)(c\ d\ e)\G_{6'}(a\ b\ d\ c)\G_{6'}(a\ b)(c\ e\ d) = e.
\end{eqnarray*}
Using the fact that $ \G_{6'}$ commutes with all the permutations that fix $ e $, this relation simplifies to $ \left\langle \G_{6'}, \left( d \; e \right) \right\rangle = e $ which is the pullback (by $ \varphi_1 $) of $\langle\G_{6'},\G_{10}\rangle = e$, one of the missing relations of type~\eqref{com}.

Adding edge $6'$ to $ T_0 $ and $ T_0' $ and performing the procedure described above by counting the newly derived relation $\langle\G_{6'},\G_{10}\rangle = e$, we get a map
$$\varphi_2 : (S_7\ltimes (A_{1,7} * A_{1,7})) * \langle \G_9 \rangle \twoheadrightarrow G/\nsg{\G^2}.$$
Here, $S_7$ acts on the label set $L = \set{a,b,c,d,e,f,h}$; one of the copies of $A_{1,7}$ is generated by the elements of the form $u_{x,y}$ and the other by elements of the form $u_{x,y}'$ for $x,y \in L$.

Another relation in the output pulls back by $\varphi_2$ to $ u_{f,a}u_{c,d}'u_{a,f}u_{d,c}' = e $ which, after conjugating with transpositions, implies that $u_{x,y}$ commutes with $u_{w,z}'$ for all distinct $x,y$ and $w,z$ (recall that $ u_{y,x}=u_{x,y}^{-1} $).

Next we consider the relation from the output that pulls back by $\varphi_2$ to
\begin{eqnarray*}
u_{e,a}'u_{h,e}u_{a,h}'u_{c,a}u_{h,c}'u_{a,h}u_{c,e}' = e,
\end{eqnarray*}
and simplify it. We use $\sim$ to denote conjugacy in the group.
\begin{eqnarray*}
u_{c,e} & \sim & u_{c,h}'u_{c,e}u_{h,c}' \\
& = & u_{c,e}'u_{e,h}'u_{c,a}u_{a,e}u_{h,c}' \\
& \sim & u_{e,h}'u_{c,a}u_{h,c}'u_{a,e}u_{c,e}' \\
& = & u_{e,h}'u_{c,a}u_{h,c}'u_{a,h}u_{d,e}u_{h,d}u_{c,e}' \\
& \sim & u_{h,d}u_{e,h}'u_{c,a}u_{h,c}'u_{a,h}u_{d,e}u_{c,e}' \\
& = &
u_{h,d}u_{e,a}'u_{a,h}'u_{c,a}u_{h,c}'u_{a,h}u_{d,e}u_{c,e}' \\
& = & u_{e,a}'u_{h,d}u_{d,e}u_{a,h}'u_{c,a}u_{h,c}'u_{a,h}u_{c,e}'  =  e.
\end{eqnarray*}

This implies that $u_{x,y} = e$ for all distinct $x$ and $y$.
In particular, $\G_5$ pulls back by $ \varphi_2 $ to $ (a\; f) $, i.e., $ \G_5 $ is a conjugation on $ \G_7 $ by $ \G_{2'}\G_{1'}\G_{8'}\G_{10} $.
Since $ \G_{1'}, \G_{2'}, \G_{8'} $, and $ \G_{10} $ commute with $ \G_9 $, and since $ \langle \G_7, \G_{9}\rangle = e $ holds, we conclude that $ \langle \G_5, \G_9 \rangle = e $ holds as well.

Moreover, $ \G_5 $ can be expressed as product of other generators.
This means that we have a surjection $ C_Y(\hat{T})\twoheadrightarrow G/\nsg{\G^2} $ where $ \hat{T} $ is the subgraph of $ T $ that we get by removing edge $ 5 $.
We want to find the kernel of this surjection.

As before, $ C_Y(\hat{T}) $ is isomorphic to $ S_8 \ltimes A_{1,8} $ by
$$\G_{1'} \mapsto (b\ c), \ \ \G_{2'} \mapsto (a\ b), \ \ \G_{5'} \mapsto (a\ f)u_{a,f}', \ \ \G_{6'} \mapsto (e\ h), \ \
\G_7 \mapsto (e\ f),$$
and
$$ \G_{8'} \mapsto (c\ d), \ \ \G_9 \mapsto (f\ g), \ \ \G_{10} \mapsto (d\ e).$$
Pulling back some of the defining relations of $ G/\nsg{\G^2} $ to $ C_Y(\hat{T}) $ and using the above isomorphism with $ S_8\ltimes A_{1,8} $, we get the relations $u_{f,d}'^2 = e$ and $u_{c,g}'u_{c,d}'u_{f,d}'u_{e,a}'u_{h,b}' = e$.
We can use them to find that $u_{w,z}' = e$ for all $w,z$. To this end, we first conjugate $u_{f,d}'^2 = e$ by a transposition  $(f\ c)$ to get $u_{c,d}'^2 = e$, and then simplify the other relation:
\begin{equation}\label{eq:X_3_u_prime_is_trivial1}
	e = u_{c,g}'u_{c,d}'u_{f,d}'u_{e,a}'u_{h,b}' = u_{d,g}'u_{c,d}'^2u_{f,d}'u_{e,a}'u_{h,b}' = u_{f,g}'u_{e,a}'u_{h,b}'.
\end{equation}
Conjugating by the transposition $(h\ c)$ we get
\begin{equation}\label{eq:X_3_u_prime_is_trivial2}
u_{f,g}'u_{e,a}'u_{c,b}' = e.
\end{equation}
The difference of \eqref{eq:X_3_u_prime_is_trivial1} and \eqref{eq:X_3_u_prime_is_trivial2} is $ u_{h,c}' = e $, and after conjugating by elements of $ S_8 $ we get $u_{w,z}' = e$ for all $w,z$. This implies that $A_{1,8}$ is contained in the kernel we are computing, and therefore $G/\nsg{\gamma^2}\cong S_8$. This means that the fundamental group of the Galois cover related to $X^3$ is trivial.
\end{proof}

\subsection{The surface $X^4$}

Finally, we consider a surface $X$ whose degeneration is the shadow $X^4$.
Recall that $G = \pi_1(\CP^2-S)$ is the fundamental group of the complement of the branch curve. We let $\bar{G}$ denote $\pi_1(\Galois{X}) = G/\nsg{\gamma^2}$.

\begin{thm}\label{PropX4}
The fundamental group $\bar{G}$ of the Galois cover related to $X^4$ is an abelian-by-abelian group of order $2^{23}$.
(In fact, a central extension $1 \rightarrow \Z_2^3 \rightarrow \bar{G} \rightarrow \Z_2^{20} \rightarrow 1$.)
\end{thm}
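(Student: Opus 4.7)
The strategy is to follow the same four-step ladder used for $X^3$, pushed all the way through until a non-trivial group is reached. First, I would write down the Van Kampen presentation of $G$ for $X^4$ by applying \Lref{Lemma4pt} at each of the six vertices $V_1,\ldots,V_6$ (each a $4$-point, with the ordered quadruples of incident edges read off from \Fref{X4}), appending the commutation relations $[\gamma_{i,i'},\gamma_{j,j'}]=e$ for every pair of disjoint edges of the shadow, and the projective relation $\prod_{i=12}^{1}\gamma_{i'}\gamma_i=e$. Passing to $\bar G = G/\nsg{\gamma^2}$ yields a presentation on $24$ generators which is then simplified in \Magma. Unlike $X^1,X^2,X^3$, there are no $1$-point vertices here, so \Magma's first pass will at best reduce to a list of generators indexed by the twelve edges of the dual graph, without any free identifications $\gamma_i=\gamma_{i'}$.

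The dual graph $T$ of $X^4$ is the $1$-skeleton of the cube (equivalently, the dual of the octahedron $X^4$): $8$ vertices, $12$ edges, planar and simple, with first Betti number $\beta_1(T)=5$. I would next run the multi-layered \Magma\ routine introduced in the proof of \Tref{PropX3}: verify the defining relations \eqref{CYT1}--\eqref{CYT4} of $C_Y(T)$ one by one, and for each relation that does not appear directly in the \Magma\ output, derive it by temporarily restricting to a subgraph $T_0\subsetneq T$ for which a surjection $C_Y(T_0)\cong S_{n_0}\ltimes A_{t_0,n_0}\twoheadrightarrow \bar G$ has already been established via \cite{RTV}, acting on the pulled-back relation with $S_{n_0}$ to generate the missing commutator or triple relation, feeding it back to \Magma, and iterating. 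The result of this stage is a surjection $\varphi\colon C_Y(T)\twoheadrightarrow \bar G$.

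From \cite{RTV} the kernel of $C_Y(T)\to S_8$ is $A_{5,8}\cong F_{5,8}$ (since $n=8\ge 5$), and because $T$ is planar I would invoke \cite{ALV} to handle the Artin cover $A_Y(T)$ over $B_8$; this provides five families of generators $u^{(k)}_{x,y}$ ($k=1,\ldots,5$, one family per independent cycle of $T$) for the subgroup of $\bar G$ lying above the kernel of $C_Y(T)\to S_8$. The residual $\bar G$-relations --- the squarings $\gamma_i^2=e$, the parasitic commutations not already absorbed into $C_Y(T)$, and the projective relation --- then translate into explicit relations among the $u^{(k)}_{x,y}$. The expected outcome is that these relations kill every commutator $[u^{(k)}_{x,y},u^{(\ell)}_{w,z}]$ outside a small central subgroup, force every $u^{(k)}_{x,y}$ to square into that subgroup, and collapse the abelianised kernel to a $2$-elementary abelian quotient of rank $20$. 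Counting then gives $|\bar G|=2^{20}\cdot 2^3=2^{23}$, matching the asserted central extension $1\to\Z_2^3\to\bar G\to\Z_2^{20}\to 1$.

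The hard part will be this last step. For $X^3$ the analogous derivation was short because $\beta_1(T)=1$ and the single family $u_{x,y}$ collapsed entirely to the identity under $S_8$-conjugation; here the five families have to be balanced against one another, and the same symmetry now collapses them only onto a codimension-three subspace of the abelianised kernel. Isolating the surviving central $\Z_2^3$, verifying that it really is central (and not merely normal), and checking that the rank-$20$ quotient is genuinely elementary abelian of exponent $2$ will require a careful interleaving of the \cite{ALV} machinery with iterative \Magma\ simplifications, and this is where the bulk of the technical effort will lie.
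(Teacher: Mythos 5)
Your overall ladder (Van Kampen presentation from \Lref{Lemma4pt} at the six $4$-points, parasitic and projective relations, iterative \Magma\ simplification, proving $\bar G$ is a quotient of $C_Y(T)$, then computing the kernel via \cite{RTV} and \cite{ALV}) is indeed the paper's strategy, but your concrete combinatorial prediction is wrong. The relevant graph $T$ is not the $1$-skeleton of the cube with $\beta_1(T)=5$: the \Magma\ reduction does not produce one generator per shadow edge, but instead leaves ten generators $\G_{1'},\G_{2'},\G_{3'},\G_5,\G_{5'},\G_7,\G_{8'},\G_9,\G_{11},\G_{11'}$, in which both the primed and unprimed copies of edges $5$ and $11$ survive as independent generators (doubled edges of $T$), while edges $4$, $6$, $10$, $12$ disappear entirely. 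The resulting graph (\Fref{dual_graph_4_second_run}) has $8$ vertices, $10$ edges and $\beta_1(T)=3$, so the kernel of $C_Y(T)\to S_8$ is $F_{3,8}$ with three families of generators $u,v,w$ --- not $F_{5,8}$ with five families. This is not a cosmetic difference: the central $\Z_2^3$ in the statement is precisely the span of the three commutators $[u,v],[u,w],[v,w]$, one per independent cycle, and your $t=5$ bookkeeping would not produce the counting toward $2^{23}$ in the way you sketch. (Note also that the surviving $T$ is non-simple, which is exactly why \cite{ALV} is needed; your cube graph is simple and would only require \cite{RTV}.)

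The more serious gap is that the entire content of the theorem lies in the final step, which you leave as an ``expected outcome'' reverse-engineered from the statement. One must actually pull the residual relations back to $S_8\ltimes F_{3,8}$ and show that, apart from relations that become trivial, they reduce to the six rules \eq{D1}--\eq{D6} together with the single long relation $\omega=e$ of \eq{mechoar}; then prove that in the quotient $C$ the commutators $[u,v],[u,w],[v,w]$ are central of order two and that $C$ is abelian modulo them (giving the central extension $1\to\Z_2^3\to C\to(\Z_2^3)^7\to 1$ of \Cref{C,C*}); and finally simplify $\omega$ to the $S_8$-symmetric order-two element \eq{mechoarpachot}, so that $\bar G\cong C/\sg{\omega}$ has order $2^{23}$ and sits in the stated extension $1\to\Z_2^3\to\bar G\to\Z_2^{20}\to 1$. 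Asserting that the relations ``collapse the abelianised kernel to rank $20$'' because that is what the theorem demands is not an argument; without carrying out this translation and the explicit identification of $\omega$, your proposal furnishes only a framework and determines neither the order nor the isomorphism type of $\bar G$.
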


\begin{proof}
Group $G/\nsg{\gamma^2}$, in this case, is generated by $\set{\G_i}$ for $i=1, 1', 2, 2', \dots, 12, 12'$. This time the degree of each of the six vertices $V_1,\dots,V_6$ is $4$, and the relations they induce are as in~\Lref{Lemma4pt}. For $V_1$ we have $(i,j,k,l)=(1,2,3,4)$, for $V_2$ we have $(i,j,k,l)=(3,6,8,11)$, for $V_3$ we have $(i,j,k,l)=(4,5,6,7)$, for $V_4$ we have $(i,j,k,l)=(7,10,11,12)$, for~$V_5$ the indices are $(i,j,k,l) = (2,5,9,10)$, and finally, for $V_6$, we have $(i,j,k,l) = (1,8,9,12)$.
The relations of type (5*) are:
$$[\G_{i,i'}, \G_{j,j'}]=e$$ for
$$(i,j) \in \begin{array}{l}
\Set{1}{5,6,7,10,11}\cup \Set{2}{6,7,8,11,12}\cup \Set{3}{5,7,9,10,12} \\
\cup \Set{4}{8,9,10,11,12}
\cup \Set{5}{8,11,12}\cup \Set{6}{9,10,12} \\
\cup\Set{7}{8,9}\cup \Set{8}{10}\cup\Set{9}{11}.
\end{array}$$
The projective relation is, as before, $\prodl_{i=12}^1  \G_{i'}\G_i = e$.

Group $G/\nsg{\gamma^2}$ now has 24 generators and a long list of relations. We used  \Magma\ to simplify the relations. Again,  some generators can be expressed in terms of others, and we are left with  the generators $\G_{2'}, \G_{3'}, \G_4, \G_5, \G_{5'}, \G_{6'}, \G_{7}, \G_{8'}, \G_{10},\G_{10'}$.
We label the triangles as in \Fref{X4++}, and  consider the dual graph which is generated by these generators, see \Fref{dual_graph_4_First_run}.
\begin{figure}[H]
$$\xymatrix@R=25pt@C=25pt{
{}  & V_5 \ar@{-}[dl]|9 \ar@{-}[d]|2 \ar@{-}[dr]|5 & {} & {} & V_5 \ar@{-}[dl]|5 \ar@{-}[d]|{10} \ar@{-}[dr]|9 & {} \\
V_6 \ar@{-}[r]|1 \ar@{-}[dr]|8 & V_1
\ar@{}[ru]|(0.3){\textcircled{c}}
\ar@{}[lu]|(0.3){\textcircled{b}}
\ar@{}[ld]|(0.3){\textcircled{g}}
\ar@{}[rd]|(0.3){\textcircled{h}}
\ar@{-}[r]|4 \ar@{-}[d]|3 & V_3 \ar@{-}[dl]|6 & V_3 \ar@{-}[r]|7 \ar@{-}[dr]|6 & V_4
\ar@{}[ru]|(0.3){\textcircled{a}}
\ar@{}[lu]|(0.3){\textcircled{d}}
\ar@{}[ld]|(0.3){\textcircled{e}}
\ar@{}[rd]|(0.3){\textcircled{f}}
\ar@{-}[r]|{12} \ar@{-}[d]|{11} & V_6 \ar@{-}[dl]|8 \\
{}  & V_2 & {} & {} & V_2 & {}
}$$
\caption{$X^4$ with labelled triangles}\label{X4++}
\end{figure}

\begin{figure}
$$
\xymatrix@R=30pt@C=30pt{
{} & {} & b \ar@{-}[r]^{2'} & c \ar@{-}[d]_{4} \ar@/^1ex/@{-}[r]<1.5pt>^{5} \ar@/_1ex/@{-}[r]<-1.5pt>_{5'} & d \ar@{-}[d]^7 \ar@/^1ex/@{-}[r]<1.5pt>^{10} \ar@/_1ex/@{-}[r]<-1.5pt>_{10'} & a  & \\
{} & f \ar@{-}[r]^{8'} & g \ar@{-}[r]^{3'}  & h \ar@{-}[r]^{6'} &
 e
}
$$
\caption{The dual graph after the first run in \Magma}\label{dual_graph_4_First_run}
\end{figure}
\forget
\begin{figure}[H]
	\begin{center}
		
		\definecolor{circ_col}{rgb}{0,0,0}
		\begin{tikzpicture}[x=1cm,y=1cm,scale=2]
		
		\draw [fill=circ_col] (0,0) node [below] {} circle (1pt);
		\draw [fill=circ_col] (1,0) node [below] {} circle (1pt);
		\draw [fill=circ_col] (0,1) node [below] {} circle (1pt);
		\draw [fill=circ_col] (1,1) node [below] {} circle (1pt);
		\draw [fill=circ_col] (2,1) node [below] {} circle (1pt);
		\draw [fill=circ_col] (2,0) node [below] {} circle (1pt);
		\draw [fill=circ_col] (-1,0) node [below] {} circle (1pt);
		\draw [fill=circ_col] (3,1) node [below] {} circle (1pt);
		
		\draw [-, line width = 1pt] (1,0) -- (1,1);
		\draw [-, line width = 1pt] (0,0) -- (1,0);
		\draw [-, line width = 1pt] (0,1) -- (1,1);
		\draw [-, line width = 1pt] (1,1) -- (2,1);
		\draw [-, line width = 1pt] (1,0) -- (2,0);
		\draw [-, line width = 1pt] (2,1) -- (2,0);
		\draw [-, line width = 1pt] (-1,0) -- (0,0);
		\draw [-, line width = 1pt] (2,1) -- (3,1);
		\draw [-, line width = 1pt] (1.1,0.9) -- (1.9,0.9);
		\draw [-, line width = 1pt] (2.1,0.9) -- (2.9,0.9);

                 \draw [-, line width = 1pt] (1,0) -- node [anchor=east]{$ 4 $} (1,1);
                 \draw [-, line width = 1pt] (0,0) -- node [below]{$ 3' $} (1,0);
                 \draw [-, line width = 1pt] (0,1) -- node [above]{$ 2' $} (1,1);
                 \draw [-, line width = 1pt] (1,1) -- node [above]{$ 5 $} (2,1);
\draw [-, line width = 1pt] (1,0) -- node [below]{$ 6' $} (2,0);
\draw [-, line width = 1pt] (2,0) -- node [anchor=west]{$ 7 $} (2,1);
                 \draw [-, line width = 1pt] (-1,0) -- node [below]{$ 8' $} (0,0);
                 \draw [-, line width = 1pt] (2,1) -- node [above]{$ 10 $} (3,1);
                 \draw [-, line width = 1pt] (1.1,0.9) -- node [below]{$ 5' $} (1.9,0.9);
                 \draw [-, line width = 1pt] (2.1,0.9) -- node [below]{$ 10' $} (2.9,0.9);

		\end{tikzpicture}
		
	\end{center}
	\setlength{\abovecaptionskip}{-0.15cm}
	\caption{The dual graph after the first run in \Magma}\label{dual_graph_4_First_run}
\end{figure}
\forgotten
We collected all relations that are of form (\ref{com}), noticing that relations $\langle \G_5, \G_{10'}\rangle=e, \langle \G_5, \G_{10}\rangle=e$, and  $\langle \G_{5}, \G_{7}\rangle=e$ are missing. Relation $\langle\G_{10'},\G_{5'}\G_{5}\G_{2'}\G_{5}\G_{5'}\G_{5}\G_{2'}\G_{5}\G_{5'} \rangle = e$ from the output can be simplified  to $\langle \G_5, \G_{10'}\rangle=e$, and this is enough for us to run \Magma\ on the original presentation, with this additional relation, which is now proven. We now get another simplified presentation for $G/\nsg{\gamma^2}$, this time with the generators $\G_{1'}, \G_{2'}, \G_{3'}, \G_5, \G_{5'},  \G_{7}, \G_{8'}, \G_9, \G_{11},\G_{11'}$, whose graph, denoted by $T$, is depicted in \Fref{dual_graph_4_second_run}.

\begin{figure}[H]
\[
\xymatrix@R=30pt@C=30pt{
{} a \ar@{-}[r]^{9}  &b \ar@{-}[d]^{1'} \ar@{-}[r]^{2'}  &c \ar@/^1ex/@{-}[r]<2pt>^5 \ar@/_1ex/@{-}[r]<-2pt>_{5'} &d \ar@{-}[d]^7  & \\
{} h \ar@{-}[r]^{3'}  &g \ar@{-}[r]^{8'} &f \ar@/_1ex/@{-}[r]<-2pt>_{11} \ar@/^1ex/@{-}[r]<2pt>^{11'}& e
}
\]
\caption{The dual graph after the second run in \Magma}\label{dual_graph_4_second_run}
\end{figure}
\forget
\begin{figure}[H]
	\begin{center}
		
		\definecolor{circ_col}{rgb}{0,0,0}
		\begin{tikzpicture}[x=1cm,y=1cm,scale=2]
		
		\draw [fill=circ_col] (0,0) node [below] {h} circle (1pt);
		\draw [fill=circ_col] (0,1) node [above] {a} circle (1pt);
		\draw [fill=circ_col] (1,0) node [below] {g} circle (1pt);
		\draw [fill=circ_col] (1,1) node [above] {b} circle (1pt);
		\draw [fill=circ_col] (2,0) node [below] {f} circle (1pt);
		\draw [fill=circ_col] (2,1) node [above] {c} circle (1pt);
		\draw [fill=circ_col] (3,0) node [below] {e} circle (1pt);
		%\draw [fill=circ_col] (4,0) node [below] {} circle (1pt);
		\draw [fill=circ_col] (3,1) node [above] {d} circle (1pt);
		%\draw [fill=circ_col] (4,1) node [below] {} circle (1pt);
		
		\draw [-, line width = 1pt] (0,0) -- (3,0);
		\draw [-, line width = 1pt] (0,1) -- (1,1);
		\draw [-, line width = 1pt] (1,1) -- (2,1);
		\draw [-, line width = 1pt] (2,1) -- (3,1);
		\draw [-, line width = 1pt] (2.1,0.9) -- (2.9,0.9);
		%\draw [-, line width = 1pt] (3,1) -- (4,1);
		\draw [-, line width = 1pt] (2,0) -- (3,0);
		%\draw [-, line width = 1pt] (3,0) -- (4,0);

                 \draw [-, line width = 1pt] (1,0) -- node [anchor=east]{$ 1' $} (1,1);
                 \draw [-, line width = 1pt] (0,0) -- node [below]{$ 8' $} (3,0);
                 \draw [-, line width = 1pt] (0,0) -- node [below]{$ 3' $} (1,0);
                 \draw [-, line width = 1pt] (0,1) -- node [above]{$ 9 $} (1,1);
                 \draw [-, line width = 1pt] (1,1) -- node [above]{$ 2' $} (2,1);
                 \draw [-, line width = 1pt] (2,1) -- node [above]{$ 5 $} (3,1);
                 \draw [-, line width = 1pt] (3,1) -- node [anchor=west]{$ 7 $} (3,0);
                 %\draw [-, line width = 1pt] (3,1) -- node [above]{$ 9 $} (4,1);
                 \draw [-, line width = 1pt] (2,0) -- node [below]{$ 11 $} (3,0);
                 \draw [-, line width = 1pt] (2.1,0.9) -- node [below]{$ 5' $} (2.9,0.9);
                 \draw [-, line width = 1pt] (2.1,0.1) -- node [above]{$ 11' $} (2.9,0.1);

		\end{tikzpicture}
		
	\end{center}
	\setlength{\abovecaptionskip}{-0.15cm}
	\caption{The dual graph after the second run in \Magma}\label{dual_graph_4_second_run}
\end{figure}
\forgotten{}

We want to verify that our group is a quotient of $C_Y(T)$, where $T$ is given in \Fref{dual_graph_4_second_run}.  First we notice that the relations $\langle \G_{2'}, \G_{9}\rangle=e,\langle \G_5, \G_{7}\rangle=e$, $[\G_5,\G_9] = e$ and $[\G_{5'},\G_9] = e$ are missing; but these can be deduced using other relations from the output; for example, we simplified $\langle \G_{2'} \G_{5'} \G_{2'}, \G_{9}\rangle=e$ to get $\langle \G_{2'}, \G_{9}\rangle=e$. Thereafter, we proved the following relations:
\begin{eqnarray*}
{}[\G_{1'}\G_{2'} \G_{1'},\G_{9}] &=& e \\
{}[\G_{1'}\G_{3'} \G_{1'},\G_{8'}] &=& e \\
{}\langle\G_{5'},\G_{2'}\G_{5}\G_{2'} \rangle &=& e \\
{}\langle\G_{5'},\G_{7}\G_{5}\G_{7} \rangle &=& e \\
{}\langle\G_{11'},\G_{7}\G_{11}\G_{7} \rangle &=& e \\
{}\langle\G_{11'},\G_{8'}\G_{11}\G_{8'} \rangle &=& e \\
{}[\G_{2'}\G_{5'} \G_{2'},\G_{7}\G_{5}\G_{7}] &=& e \\
{}[\G_{8'}\G_{11'} \G_{8'},\G_{7}\G_{11}\G_{7}] &=& e.
\end{eqnarray*}
By that, we verified all the defining relations \eq{CYT1}--\eq{CYT4}, so our group is indeed a quotient of $C_Y(T)$.

Let $F_{t,n}$ be the group defined in Subsection \ref{Vishne}.
As explained in Subsection~\ref{ss:CYT}, $C_Y(T) \,\cong\, S_8 \ltimes F_{3,8}$.
To compute $G/\nsg{\gamma^2}$, it remains to cast all the relations in the language of the generators of $F_{3,8}$. Recall that $F_{3,8}$ is a subgroup of the direct product $(\F_3)^8$; let $u_i, v_i, w_i$ be the generators of the $i$th component. So the free groups $\sg{u_i,v_i,w_i}$  commute elementwise.
In this language, $F_{3,8}$ is generated by all the elements $u_ju_i^{-1}$, $v_jv_i^{-1}$ and $w_jw_i^{-1}$.

Based on the spanning subgraph $T_0$ that was obtained from $T$ by removing edges $5$, $5'$ and $11'$, the final stage is to reinterpret the defining relations of $\bar{G} = G/\nsg{\gamma^2}$, which we do by mapping $\bar{G}$ to a quotient of $S_8 \ltimes F_{3,8}$ by
$$\G_{1'} \mapsto (b g),
\ \
\G_{2'} \mapsto (b c),
\ \
\G_{3'} \mapsto (g h),
\ \
\G_{7} \mapsto (d e),
\ \
\G_{8'} \mapsto (f g),
\ \
\G_{9} \mapsto (a b)$$
and
$$
\G_{5} \mapsto (c d)u_c u_d^{-1},
\ \
\G_{5'} \mapsto (c d) v_c v_d^{-1},
\ \
\G_{11'} \mapsto (e f) w_e w_f^{-1}.$$

As may be expected, most of the defining relations of $\bar{G}$ now become trivial.
One exception is the relation $\omega = e$, where
\begin{equation}\label{mechoar}
\omega = [u_d,v_d][v_c,w_c]u_a^{-1}w_av_b^{-1}u_bw_b^{-1}v_bu_cw_cw_d^{-1}u_d^{-1}w_e^{-1}u_e^{-1}w_fu_f^{-1}u_gw_gu_hw_h^{-1}.
\end{equation}
The other nontrivial relations are much nicer, and by acting with $S_8$, we obtain the following rules:
\begin{eqnarray}
{}[u_i,v_i]&=&[u_j,v_j], \label{D1}\\
{} [u_i,w_i]&=&[u_j,w_j],  \\
{}[v_i,w_i]&=&[v_j,w_j], \label{D3}\\
{} (u_i)^2&=&(u_j)^2, \label{D4}\\
{} (v_i)^2&=&(v_j)^2, \\
{} (w_i)^2&=&(w_j)^2.  \label{D6}
\end{eqnarray}

Let $C^*$ and $C$, respectively, denote the quotient of $F^*_{3,8}$ and $F_{3,8}$ by the relations \eq{D1}--\eq{D6}. It is easier to compute in $C^*$: Write $[u,v]$ for all $[u_i,v_i]$, and likewise for $[u,w]$, $[v,w]$, and the squares $u^2, v^2, w^2$. We have that $v_i^2
= v_j^2 = u_iv_j^2u_i^{-1} = u_iv_i^2u_i^{-1}=[u,v]^2v_i^2$, so $[u,v]^2 = 1$, and similarly, $[u,w]^2 = [v,w]^2 = 1$.  Modulo the commutators $[u,v]$, $[v,w]$, and $[w,u]$, which are central, the group is abelian.
\begin{cor}\label{C,C*}
Groups $C$ and $C^*$ are central extensions, as described in the following commutative diagram:
$$\xymatrix@C=12pt@R=12pt{
{} & 1 \ar@{->}[d] & 1  \ar@{->}[d]& 1 \ar@{->}[d] & {} \\
1 \ar@{->}[r] & \Z_2^3 \ar@{->}[r] \ar@{->}[d] & C \ar@{->}[r]^{\operatorname{ab}} \ar@{->}[d] & (\Z_2^3)^7  \ar@{->}[r] \ar@{->}[d] & 1 \\
1 \ar@{->}[r] & \Z_2^3 \times \Z^3 \ar@{->}[r] \ar@{->}[d] & C^* \ar@{->}[r]^{\operatorname{ab}} \ar@{->}[d] & (\Z_2^3)^8 \ar@{->}[r] \ar@{->}[d] & 1  \\
1 \ar@{->}[r] & \Z^3 \ar@{->}[r]^{\cdot 2} \ar@{->}[d] & \Z^3 \ar@{->}[r] \ar@{->}[d] & \Z_2^3 \ar@{->}[r] \ar@{->}[d]& 1 \\
{} & 1 & 1 & 1 & {}
}$$
Here the group at the upper-left corner is generated by the commutator;
the arrows so-marked are the abelianization maps, and the middle row is induced by the short exact series
$$1 \rightarrow F_{3,8} \rightarrow F_{3,8}^* \rightarrow \mathbb{Z}^3 \rightarrow 1$$ described in \Dref{Ftndef}.
\end{cor}

Notice that (for distinct $i,j,k$) $u_iu_j^{-1}$ commutes with $v_iv_j^{-1}$ in $C$, but $[u_iu_j^{-1},v_iv_k^{-1}] = [u,v]$. In light of \Cref{C,C*} we can simplify the element $\omega$ defined in \eq{mechoar}. The $b$-entry of this element relation cancels the commutators, and we get
$\omega = u_a^{-1}w_a
u_bw_b^{-1}
u_cw_c
w_d^{-1}u_d^{-1}
w_e^{-1}u_e^{-1}
w_fu_f^{-1}
u_gw_g
u_hw_h^{-1}$.  Using $u_i^2 = u_j^2$ and $w_i^2 = w_j^2$, as well as $[u,w]^2 = 1$. We then get a further simplification to
\begin{equation}\label{mechoarpachot}
\omega = [u,w]
(u_au_bu_cu_d
u_e^{-1}u_f^{-1}u_g^{-1}u_h^{-1})
(w_aw_bw_cw_d
w_e^{-1}w_f^{-1}w_g^{-1}w_h^{-1}).
\end{equation}
Again, by \eq{D4} and \eq{D6}, this element is symmetric under the action of $S_8$ on the indices, and has order $2$, so $C/\sg{\omega}$ has order $2^{23}$.
Recall that this is the group $\pi_1(\Galois{X^4})$, whose computation is the main goal of this paper.

Recalling that the fundamental group of the complement of the branch curve is the semidirect product of this group with $S_8$, it is desirable to describe the $S_8$-module structure of the abelianization.

\begin{rmk}
Considering all groups as $S_8$-modules, the abelianization $C^*/[C^*,C^*]$ is a direct sum of three copies of the natural module $V = \Z_2^8$, namely $\sg{u_i}$, $\sg{v_i}$, and $\sg{w_i}$.

For the sake of comparison, note that for $p>2$, $\Z_p^8$ is a direct sum of two irreducible submodules, $\Z_p^8 = (\Z_p^8)_0 \oplus D$, where the left component is the subspace of zero-sum vectors and $D$ is the ``diagonal'' subspace, spanned by $(1,\dots,1)$. On the other hand, for $p = 2$ the irreducible modules are  $0 \subset D \subset V_0 \subset V$, of dimensions $0<1<7<8$ over $\Z_2$, respectively. Now, while $C^*/[C^*,C^*] = V \oplus V \oplus V$, it follows from \Cref{C,C*} that the abelianization of $C$ is $C/[C,C] \cong V_0 \oplus V_0 \oplus V_0$.

Finally, $\omega$ glues the $u$ and $w$ copies of $D$ in this sum, so that the abelianization of $\pi_1(\Galois{X^4})$, is  $$(C/\sg{\omega})/[C/\sg{\omega},C/\sg{\omega}] \cong V_0 \oplus (V_0 \oplus V_0)/D(1,1),$$
where $D(1,1)$ is the diagonal copy of $D$ in the direct sum $D \oplus D \subset V_0 \oplus V_0$.
\end{rmk}

%%%%%%%%%%%%%%%%%%%%%  FORGET
\forget
%From here a mistake, we can even delete this part.
We now define the map from the group $G/\nsg{\gamma^2}$ to $S_8 \ltimes A_{2,8}$ by setting
$$\G_{1'} \mapsto (b\ g), \G_{2'} \mapsto (b\ c), \G_{3'} \mapsto (g\ h), \G_{5} \mapsto (c\ d)u_{c,d}, \G_{5'} \mapsto (c\ d)u_{c,d}', \G_7 \mapsto (d\ e), \G_{8'} \mapsto (f\ g), \G_9 \mapsto (a\ b),$$
and $$\G_{11} \mapsto (e\ f),\G_{11'} \mapsto \G_{11'}.$$
Now, we show that $u_{x,y} = e$ for all $x,y$ and $u_{w,z}' = e$ for all $w,z$. To prove this we list down all the required relations in $S_8 \ltimes A_{2,8}$ among the second \Magma\ output.
\begin{itemize}
    \item[1.] $\G_{11'}\G_{11'} = e$, \ \ 2. $[(b\ g), \G_{11'}] = e$, \ \ 3. $[(a\ b), \G_{11'}] = e$, \ \ 4. $[(b\ c), \G_{11'}] = e$, \ \ 5. $[(g\ h), \G_{11'}] = e$
    \item[6.]  $[(d\ g), \G_{11'}] = e \quad ({\overset{2}\Rightarrow}\ [(d\ b), \G_{11'}] = e\  {\overset{3}\Rightarrow}\ [(d\ a), \G_{11'}] = e)$
    \item[7.] $[(c\ d)u_{c,d}, \G_{11'}] = e \quad ({\overset{4}\Rightarrow}\ [(b\ d)u_{b,d}, \G_{11'}] = e \ {\overset{6}\Rightarrow}\ [u_{b,d}, \G_{11'}] = e \  {\overset{3}\Rightarrow}\ [u_{a,d}, \G_{11'}] = e )$
    \item[8.] $[u_{d,c}'u_{c,d},\G_{11'}] = e \quad (\underset{(u_{b,d} =e)}{\overset{4}\Rightarrow}\ [u_{d,b}', \G_{11'}] = e)$.
\end{itemize}
We note the following relation from the \Magma:
\begin{eqnarray*}
(a\ d\ e)u_{a,c}'u_{c,e}u_{e,a}'u_{a,c}u_{c,e}'\G_{11'}(a\ b)(d\ e) \G_{11'}(a\ e\ d\ b) u_{a,c}'u_{c,d}u_{d,a}'u_{a,c}u_{c,d}'\G_{11'} = e.
\end{eqnarray*}
By applying the isomorphism of $A_{2,8} \cong F_{2,8}$ (see \cite[Theorem 5.7]{RTV})  and using $2,4,6$ and $7$ we simplify the above relation:
\begin{eqnarray*}
(a\ d\ e) u_{c}'^{-1}u_{a}' u_{e}^{-1}u_{c} u_{a}'^{-1}u_{e}' u_{c}^{-1}u_{a} u_{e}'^{-1}u_{c}'
\G_{11'}(a\ b)(d\ e) \G_{11'}(a\ e\ d\ b)
u_{c}'^{-1}u_{a}' u_{d}^{-1}u_{c} u_{a}'^{-1}u_{d}'
u_{c}^{-1}u_{a} \quad \quad \cr
u_{d}'^{-1}u_{c}' \G_{11'} = e \cr
\Rightarrow (a\ d\ e) u_{c}'^{-1} u_{e}^{-1}u_{c} u_{e}' u_{c}^{-1}u_{a} u_{e}'^{-1}u_{c}'
\G_{11'}(a\ b)(d\ e) \G_{11'}(a\ e\ d\ b)
u_{c}'^{-1}u_{a}' u_{d}^{-1}u_{c} u_{a}'^{-1} u_{c}^{-1}u_{a} u_{c}' \G_{11'} = e \cr
\Rightarrow (a\ d\ e) u_{c}'^{-1} u_{e}^{-1}u_{c} u_{c}^{-1}u_{a} u_{c}'
\G_{11'}(a\ b)(d\ e) \G_{11'}(a\ e\ d\ b)
u_{c}'^{-1}u_{a}' u_{d}^{-1} u_{a}'^{-1} u_{a} u_{c}' \G_{11'} = e \cr
\Rightarrow (a\ d\ e) u_{c}'^{-1} u_{e}^{-1}u_{a} u_{c}'
\G_{11'}(a\ b)(d\ e) \G_{11'}(a\ e\ d\ b)
u_{c}'^{-1} u_{d}^{-1} u_{a} u_{c}' \G_{11'} = e \cr
\Rightarrow (a\ d\ e) u_{e}^{-1}u_{a} \G_{11'}(a\ b)(d\ e) \G_{11'}(a\ e\ d\ b) u_{d}^{-1} u_{a}  \G_{11'} = e \cr
\Rightarrow u_{d}^{-1}u_{e} (a\ d) (d\ e) \G_{11'}(d\ e) (a\ b) \G_{11'}(d\ e)(a\ d\ b) u_{d}^{-1} u_{a}  \G_{11'} = e \cr
\Rightarrow u_{d}^{-1}u_{e} (a\ d) (a\ b) (d\ e)\G_{11'}(d\ e)\G_{11'}(d\ e) (a\ d\ b) u_{d}^{-1} u_{a}  \G_{11'} = e \cr
\Rightarrow u_{d}^{-1}u_{e} (a\ d) (a\ b) \G_{11'} (a\ d\ b) u_{d}^{-1} u_{a}  \G_{11'} = e \cr
\Rightarrow u_{d}^{-1}u_{e} (a\ d) (a\ b) \G_{11'} (a\ b) (a\ d) u_{d}^{-1} u_{a}  \G_{11'} = e \cr
\Rightarrow u_{d}^{-1}u_{e} (a\ d) (a\ b) (a\ b) (a\ d) \G_{11'} u_{d}^{-1} u_{a}  \G_{11'} = e \cr
\Rightarrow u_{e,d} \G_{11'} u_{a,d}  \G_{11'} = e \cr
\Rightarrow u_{e,d} \G_{11'}  u_{a,d}  \G_{11'} = e \cr
\Rightarrow u_{e,d}  u_{a,d}   = e.
\end{eqnarray*}
Conjugating the above relation by $(a\ c)$ we obtain $u_{e,d} =  u_{d,c}$ and by substituting back we get
\[
u_{a,c} = u_{d,c} u_{a,d} = u_{e,d}  u_{a,d}  = e.
\]
Hence $u_{x,y} = e$ for all $x,y$. Now, we modify the group map from $G/\nsg{\gamma^2}$ to $S_8 \ltimes A_{2,8}$ by setting
\[
\G_{5} \mapsto (c\ d), \G_{11} \mapsto (e\ f)v_{e,f}, \G_{11'} \mapsto (e\ f)v_{e,f}',
\]
and translating the second \Magma\ output under this new map. We get one of the relation as $v_{b,g}^2 = e$, which in particular gives $v_{x,y}^2 = e$ for all $x,y$. Applying the isomorphism $A_{2,8} \cong F_{2,8}$ defined by $v_{x,y}' \mapsto v_y'^{-1}v_x', v_{w,z} \mapsto v_z^{-1}v_w$ \cite[Theorem 5.7]{RTV}, we notice that $v_z^2 = e$ holds in $F_{2,8}$ as well. We show that $v_{x,y}'^2 = e$ as well. To this end we pick the following relation and solve to get
\begin{eqnarray*}
 v_{f,d}'v_{d,a}v_{a,e}'v_{d,e}v_{h,b}v_{h,a}v_{c,h}'v_{e,c}v_{a,b}'v_{b,g}v_{g,a}'v_{c,g} = e\cr
\Rightarrow v_{f,d}'v_{d,a}v_{a,e}'v_{d,e}v_{a,b}v_{h,a}^2v_{c,h}'v_{e,c}v_{a,b}'v_{c,g}v_{b,c}v_{g,a}'v_{c,g} = e\cr
\Rightarrow v_{f,d}'v_{d,a}v_{a,e}'v_{d,e}v_{a,b}v_{c,h}'v_{e,c}v_{c,g}v_{a,b}'v_{g,a}'v_{b,c}v_{c,g} = e \cr
\Rightarrow v_{f,d}'v_{d,a}v_{a,e}'v_{d,e}v_{a,b}v_{c,h}'v_{e,g}v_{g,b}'v_{b,g} = e \cr
\Rightarrow v_{a,g}v_{f,d}'v_{d,a}v_{a,e}'v_{d,e}v_{a,b}v_{e,g}v_{c,h}'v_{g,b}'v_{b,a} = e \cr
\Rightarrow v_{f,d}'v_{a,g}v_{d,a}v_{a,e}'v_{d,e}v_{a,b}v_{e,g}v_{c,h}'v_{g,b}'v_{b,a} = e \cr
\Rightarrow v_{f,d}'v_{a,e}'v_{d,g}v_{d,e}v_{a,b}v_{e,g}v_{c,h}'v_{g,b}'v_{b,a} = e \cr
\Rightarrow v_{f,d}'v_{a,e}'v_{e,g}v_{d,e}^2v_{a,b}v_{e,g}v_{c,h}'v_{g,b}'v_{b,a} = e \cr
\Rightarrow v_{f,d}'v_{a,e}'v_{a,b}v_{c,h}'v_{g,b}'v_{b,a} = e.
\end{eqnarray*}
Now we apply the isomorphism $A_{2,8} \cong F_{2,8}$ to obtain
\begin{eqnarray*}
v_{d}'^{-1}v_{f}' v_{e}'^{-1}v_{a}' v_{b}^{-1}v_{a} v_{h}'^{-1}v_{c}' v_{b}'^{-1}v_{g}' v_{a}^{-1}v_{b} = e \cr
\Rightarrow v_{b}^{-1} v_{d}'^{-1}v_{f}' v_{e}'^{-1}v_{a}' v_{h}'^{-1}v_{c}' v_{b}'^{-1}v_{g}' v_{a} v_{a}^{-1}v_{b} = e \cr
\Rightarrow v_{d}'^{-1}v_{f}' v_{e}'^{-1}v_{a}' v_{h}'^{-1}v_{c}' v_{b}'^{-1}v_{g}'  = e. \cr
\end{eqnarray*}
Therefore, we get $v_{f,d}'v_{a,e}'v_{c,h}'v_{g,b}' = e$ in $A_{2,8}$. Conjugating by the transposition $(f,d)$ and subtracting from it, we get
\begin{eqnarray*}
v_{f,d}' - v_{d,f}' = e\cr
\Rightarrow v_{f,d}'^2 = e.
\end{eqnarray*}
Hence, by appropriately conjugating, we get $v_{x,y}'^2 = e$ for all $x,y$ in $A_{2,8}$ and $v_{x}'^2 = e$ for all $x$ in $F_{2,8}$. Now we focus on the following groups
\[
H := \frac{F_{2,8}}{<v_x^2=v_y^2,v_x'^2=v_y'^2 >}\quad \textit{and} \quad G:= \frac{\prodl_{1}^{8}{<v_x,v_x'>}}{<v_x^2=v_y^2,v_x'^2=v_y'^2 >}
\]
Let $N:= <v_x^2,v_x'^2| \ v_x,v_x' \in G>$. Then it is clear from the definition of $G$ that $N \subset C(G)$ and is a normal
subgroup of $G$. Also, it can be seen that $H \cap N = \{e\}$ and $H\cdot N = G$. From the second isomorphism theorem, we get
\[
H = \frac{H}{H\cap N} \cong \frac{G}{N}.
\]
\textbf{Claim:} $H := \mathbb{Z}_2^8 \ltimes \mathbb{Z}_2^7 $.
Notice that $G$ is a direct product of $8$ copies of infinite dihedral group $D_{\infty}$, where $D_{\infty}$ is defined as a semidirect
product $\mathbb{Z} \ltimes \mathbb{Z}_2$ with the action $\overline{1}: n \mapsto -n$, for $\overline{1} \in \mathbb{Z}_2$ and $n \in \mathbb{Z}$. Therefore
\[
G \cong D_{\infty} \times \dots \times D_{\infty} \cong \mathbb{Z}^8 \ltimes \mathbb{Z}_2^8,
\]
where the action of $\mathbb{Z}_2^8$ on $\mathbb{Z}^8$ is given by
\[
(\overline{a_i})_i \cdot (x_j)_j = (\overline{a_i} \cdot x_i)_i
\]
for $(\overline{a_i})_i \in \mathbb{Z}_2^8$ and $(x_j)_j \in \mathbb{Z}^8$. Now we look at the image of $H$ in $G$. For that let $\sigma_2$ denote the hyperplane in $\mathbb{Z}_2^8$ defined by $\sum \overline{a_i} = 0$ and $\sigma_{\mathbb{Z}}$ to be a
hyperplane in $\mathbb{Z}^8$ defined by $\sum x_i = \textit{even}$. Then $\sigma_2$ acts on $\sigma_{\mathbb{Z}}$ by the same
rule as shown above, and we get
\[
\sigma_{\mathbb{Z}} \ltimes \sigma_2 = \frac{A_{2,8}}{<v_{xy}^2,v_{xy}'^2,>} = H.
\]
Notice that, $v_i\cdot v_i' = x_i$ and $v_i = a_i$. In the new generators, the commutator relation becomes
$v_iv_i'v_i^{-1}v_i'^{-1} = v_iv_i'v_iv_i' = 2x_i$. Therefore we get $2x_i = 2x_j$ which corresponds to vectors of the form
$(2,-2,2,-2,\dots,2,-2) \in \mathbb{Z}_8$. Conjugating $(2,-2,2,-2,\dots,2,-2)$ by vector $(0,1,1,\dots,0) \in \mathbb{Z}_2^8$ and its inverse
we obtain $(2,2,0,\dots,0)$. Therefore we mod-out these vectors from $\sigma_{\mathbb{Z}} \ltimes \sigma_2$. In order to do so, we first define a subgroup (normal) of $\sigma_{\mathbb{Z}}$
\[
P := \{(x_1,x_2,\dots.,x_8) \in \sigma_{\mathbb{Z}} | \ 4 \ \text{devides} \ \sum x_i \ \text{and} \ 2\ \text{devides} \ x_j \ \text{for all $j$} \}
\]
We claim now that $\sigma_{\mathbb{Z}}/P \cong \mathbb{Z}_2^8$. To see this we choose a basis of $\sigma_{\mathbb{Z}}$ to be $\{v_1:= x_1-x_2, v_2:= x_1-x_3,\dots, v_7:= x_1-x_8, v_8:= 2x_1\}$ and that of $\sigma_2$ to be $\{w_1:= a_1+a_2, w_2:= a_1+a_3,\dots, w_7:= a_1+a_8 \}$. Under this setting, $P$ is generated by $\{2v_1,2v_2,\dots,2v_8 \}$ and therefore we get
$\sigma_{\mathbb{Z}}/P \cong \mathbb{Z}_2^8$. Action of $\sigma_2$ on
$\sigma_{\mathbb{Z}}$ is given by
\[
w_i \cdot v_j =
         \begin{cases}
             -v_i \ \text{if $j=i$} \cr
             v_8 - v_j \ \text{if $j (\ne i) < 8$} \cr
             -v_8 \ \text{if $j=8$}.
        \end{cases}
\]
After moding out by $P$, the above action becomes
\[
w_i \cdot v_j =
         \begin{cases}
             v_i \ \text{if $j=i$} \cr
             v_8 + v_j \ \text{if $j (\ne i) < 8$} \cr
             v_8 \ \text{if $j=8$}.
        \end{cases}
\]
and final group becomes $\mathbb{Z}_2^8 \ltimes \mathbb{Z}_2^7$.

\forgotten

\end{proof}

%+-------------------------------------------+
%|						                     |
%|    		BIBLIOGRAPHY		             |
%|						                     |
%+-------------------------------------------+
%\newpage
%\cleardoublepage
%\phantomsection
\addcontentsline{toc}{section}{References}

\end{document}